\theoremstyle{plain}
\newtheorem{thm}{Theorem}[section]
\newtheorem{lem}[thm]{Lemma}
\newtheorem{cor}[thm]{Corollary}
\newtheorem{prop}[thm]{Proposition}
\theoremstyle{definition}
\newtheorem{ex}[thm]{Example}
\theoremstyle{remark}
\newtheorem*{remark*}{Remark}
\newtheorem{ass}{Assumption}
\numberwithin{equation}{section}
\newcommand{\cU}{{\mathcal{U}}}
\newcommand{\cW}{{\mathcal{W}}}
        \newcommand{\field}[1]{{\mathbb{#1}}}
        \newcommand{\RR}{\field{R}}
        \newcommand{\CC}{\field{C}}
\newcommand{\supp}{\operatorname{supp}}
\newcommand{\Dom}{\mbox{\rm Dom}}
\newcommand{\id}{\mbox{\rm id}}
\begin{document}

\title[A vanishing theorem in $K$-theory for spectral projections]{A vanishing theorem in $K$-theory for spectral projections of a non-periodic magnetic Schr\"odinger operator}

\author[Y. A. Kordyukov]{Yuri A. Kordyukov}

\address{Institute of Mathematics, Ufa Federal Research Centre, Russian Academy of Sciences, 112~Chernyshevsky str., 450008 Ufa, Russia 
}  

\email{yurikor@matem.anrb.ru}

\author[V. M. Manuilov]{Vladimir M. Manuilov}

\address{Moscow Center for Fundamental and Applied Mathematics, Moscow State University,
Leninskie Gory 1, 119991 Moscow, Russia}

\email{manuilov@mech.math.msu.su}


\subjclass[2000]{Primary 58J37; Secondary 58B34, 81Q70}

\keywords{Magnetic Schr\"odinger operator, semiclassical asymptotics, Roe algebra, $K$-theory,  manifolds of bounded geometry}

\begin{abstract}
 We consider the Schr\"odinger operator $H(\mu) = \nabla_{\bf A}^*\nabla_{\bf A} + \mu  V$ on a Riemannian manifold $M$ of bounded geometry, where $\mu>0$ is a coupling parameter, the magnetic field ${\bf B}=d{\bf A}$ and the electric potential $V$ are uniformly $C^\infty$-bounded, $V\geq 0$. We assume that, for some $E_0>0$, each connected component of the sublevel set $\{V<E_0\}$ of the potential $V$ is relatively compact. Under some assumptions on geometric and spectral properties of the connected components, we show that, for sufficiently large $\mu$, the spectrum of $H(\mu)$ in the interval $[0,E_0\mu]$ has a gap, the spectral projection of $H(\mu)$, corresponding to the interval $(-\infty,\lambda]$ with $\lambda$ in the gap, belongs to the Roe $C^*$-algebra $C^*(M)$ of the manifold $M$, and,  if $M$ is not compact, its class in the $K$ theory of $C^*(M)$ is trivial. 
\end{abstract}

\date{}

 \maketitle
\section{Introduction}
Let $(M,g)$ be a Riemannian manifold of bounded geometry. This means that the curvature $R^{TM}$ of the Levi-Civita connection $\nabla^{TM}$ and its covariant derivatives of any order are uniformly bounded on $M$ in the norm induced by $g$, and the injectivity radius $r_M$ of $(M, g)$ is positive. In particular, $M$ is complete.

Let $\bf B$ be a closed differential 2-form on $M$. We assume that ${\bf B}\in C^\infty_b\Omega^2(M)$, which means that $\bf B$ and its covariant derivatives of any order are uniformly bounded on $M$ in the norm induced by $g$. 
We assume that $\bf B$ is exact. Choose a 1-form $\bf A$ on $M$ such that $d{\bf
A} = \bf B$. As in geometric quantization we may regard $\bf A$ as
defining a Hermitian connection $\nabla_{\bf A} = d+i{\bf A}$ on
the trivial complex line bundle $\mathcal L$ over $M$, whose curvature is
$i\bf B$. Physically we can think of $\bf A$ as the magnetic vector potential for the magnetic field $\bf B$.

Consider the magnetic Schr\"odinger operator
given by
\begin{equation}\label{ham}
H(\mu) = \nabla_{\bf A}^*\nabla_{\bf A} + \mu  V,
\end{equation}
where $V$ is a real-valued smooth function on $M$ and $\mu>0$ is the coupling constant.
We assume that $V\in C^\infty_b(M)$, which means that $V$ and its covariant derivatives of any order are uniformly bounded on $M$ in the norm induced by $g$. We also assume that $V(x) \geq 0$ for any $x\in M$.  

\begin{ass}\label{a1}
There exists $E_0>0$ such that each connected component of the sublevel set 
\[
U = \{x\in M\,:\,  V(x) < E_0\}
\]
is a relatively compact domain (with smooth boundary) in $M$. { Moreover, if we denote by $D$  the set of connected components of $U$ and, for any $h \in D$, by $U_{h}$ the corresponding connected component, then 
\begin{equation}\label{e:tr0}
\sup_{h\in D}{\rm diam}\,(U_h)<\infty,
\end{equation}
\begin{equation}\label{e:tr1}
\inf_{h_1,h_2\in D, h_1\neq h_2}{\rm dist}\,(U_{h_1}, U_{h_2})>0,
\end{equation} 
and
\begin{equation}\label{e:E0m}
E_0>\sup_{h\in D}m_{U_{h}}(V),
\end{equation}
where
\[
m_{U_h}(V)=\min \{V(x)\, :\, x\in U_h\}.
\]
%
}
\end{ass}

It follows from bounded geometry of $M$ that for any $R>0$ the number of $U_h$ in each ball of radius $R$ is uniformly bounded.  

{
From now on, we fix $E_0>0$, satisfying Assumption~\ref{a1}. 
}

Consider the Dirichlet realization $H_{U}(\mu)$ of the operator $H(\mu)$ in $U$. { Since
\[
U=\bigsqcup_{h \in D}U_{h},
\] 
} the operator $H_U(\mu)$ has the form
\[
H_U(\mu)=\bigoplus_{h\in D} H_{U_h}(\mu)
\]
with respect to the direct sum decomposition $L^2(U)=\oplus_{h \in D}L^2(U_{h})$,
where $H_{U_h}(\mu)$ is the Dirichlet realization of the operator $H(\mu)$ in $U_h$.  Each operator $H_{U_h}(\mu)$, $h\in D$, has discrete spectrum. If we denote by $\sigma(H_U(\mu))$ the spectrum of the operator $H_U(\mu)$ in $L^2(U)$ and by $\sigma(H_{U_h}(\mu))$ the spectrum of the operator $H_{U_h}(\mu)$ in $L^2(U_h)$, then 
\[
\sigma(H_U(\mu))=\overline{\bigcup_{h\in D}\sigma(H_{U_h}(\mu))}.
\]

\begin{ass}\label{a3}
{For the fixed $E_0>0$, satisfying Assumption~\ref{a1}, there exist $E_1\in (0,E_0)$ and $\mu_0>0$ such that, for any $\mu>\mu_0$, the spectrum of the operator $H_U(\mu)$ in the interval $[0,E_1\mu]$ has a gap $(a(\mu),b(\mu))$ such that}
\begin{equation}
\label{e:size}
\liminf_{\mu\to+\infty} \mu^{-1/2}\log (b(\mu)-a(\mu))\geq 0.
\end{equation} 
\end{ass}

Equivalently, the latter condition means that for any $\epsilon>0$, there exist $\mu_\epsilon>0$ and $c_\epsilon>0$ such that, for any $\mu>\mu_\epsilon$, we have 
\[
b(\mu)-a(\mu)>c_\epsilon e^{-\epsilon \mu^{1/2}}.
\]  

{By a gap in the spectrum $\sigma(A)$ of a self-adjoint operator $A$, we understand a bounded connected component of $\mathbb R\setminus \sigma(A)$.}

Recall the definition of the Roe algebra of $M$ \cite{Roe}. 
For a Hilbert space $H$ we write $\mathbb B(H)$ (resp., $\mathbb K(H)$) for the algebra of all bounded (resp., all compact) operators on $H$. 
Consider the standard action of the algebra $C_0(M)$ of continuous functions on $M$ vanishing at infinity on $L^2(M)$ by multiplication. An operator $T\in\mathbb B(L^2(M))$ is {\it locally compact} if the operators $Tf$ and $fT$ are compact for any $f\in C_0(M)$. It has {\it finite propagation} if there exists some $R>0$ such that $fTg=0$ whenever the distance between the supports of $f,g\in C_0(M)$ is greater than $R$. The {\it Roe algebra} $C^*(M)$ is the norm completion of the $*$-algebra of locally compact, finite propagation operators on $L^2(M)$. 

The operator $H(\mu)$ is essentially self-adjoint in $L^2(M)$ with initial domain $C^\infty_c(M)$. Denote by $\sigma(H(\mu))$ the spectrum of the operator $H(\mu)$. For $\lambda\in \RR$, let $E_{H(\mu)}(\lambda) = {\chi}_{(-\infty, \lambda]}(H(\mu))$ denote the spectral projection of $H(\mu)$, corresponding to the interval $(-\infty,\lambda]$.  

The main result of the paper is the following theorem.

\begin{thm} \label{t:trivial}
Let Assumptions \ref{a1} and \ref{a3} hold and let $(a(\mu), b(\mu))$, $\mu>\mu_0$, be the gap in the spectrum of the operator $H_U(\mu)$ in the interval $[0,E_1\mu]$ given by Assumption \ref{a3}. Then there exist  $C>0$, $c>0$, and $\mu_1>\mu_0$ such that for any $\mu>\mu_1$ the interval $$(a(\mu)+Ce^{-c\mu^{1/2}},b(\mu)-Ce^{-c\mu^{1/2}})$$
 is not in the spectrum of $H(\mu)$. Moreover, for any $\lambda$ in this interval, the spectral projection $E_{H(\mu)}(\lambda)$ belongs to $C^*(M)$ and, if $M$ is not compact, satisfies
\[
[E_{H(\mu)}(\lambda)] = 0 \in K_0(C^*(M)).
\]
 \end{thm}

\begin{ex}
Assumption~\ref{a3} is a complicated spectral condition on the operator $H_U(\mu)$, and it seems impossible to check it for a general potential $V$ satisfying Assumption~\ref{a1}. It is also quite difficult to provide concrete examples of operators $H(\mu)$, satisfying Assumption~\ref{a3}. The only class of such operators, which we know, is described as follows.

Fix $h_0\in D$. Assume that, for any $h\in D$, the domain $U_h$ is simply connected and there exists an isometry $f_h : U_{h_0} \to U_h$, which preserves the magnetic field $\mathbf B$ and the potential $V$:  
\[
f^*_h (\mathbf B\left|_{U_h}\right.)=\mathbf B\left|_{U_{h_0}}\right.,\quad f^*_h (V\left|_{U_h}\right.)=V\left|_{U_{h_0}}\right.. 
\]
By gauge invariance, the operators $H_{{U_h}}(\mu)$, $h\in D$, are unitarily equivalent to the operator $H_{{U_{h_0}}}(\mu)$, and, therefore, have the same spectrum. It follows that $\sigma(H_U(\mu))=\sigma(H_{U_{h_0}}(\mu))$ is a discrete set. Given $E_1$ and $E_2$ such that $0<E_1<E_2<E_0$, by a simple quasimode construction, one can show that $\sigma(H_{U_{h_0}}(\mu))\cap [E_1\mu,E_2\mu]\neq \emptyset$ and $\sigma(H(\mu))\cap [E_1\mu,E_2\mu]\neq \emptyset$ for sufficiently large $\mu$. Using the facts that the spectrum of $H(\mu)$ on the interval $[E_1\mu,E_2\mu]$ is localized inside an exponentially small neighborhood of the spectrum of $H_U(\mu)$ and the cardinality of $\sigma(H_{U_{h_0}}(\mu))\cap [E_1\mu,E_2\mu]$ grows polynomially in $\mu$ (see Theorem~\ref{t:D} and Lemma~\ref{l:Weyl} below), one can easily prove the existence of gaps $(a(\mu),b(\mu))$ in the spectrum of $H(\mu)$ on the interval $[E_1\mu,E_2\mu]$, satisfying \eqref{e:size}, and, in particular, verify Assumption~\ref{a3}. Moreover, by Theorem~\ref{t:trivial}, we infer that, for any $E_1\in (0,E_0)$, there exists $\lambda(\mu)\in [E_1\mu, E_0\mu)$ for sufficiently large $\mu$ such that the projection $E_{H(\mu)}(\lambda(\mu))$ is non-zero, belongs to $C^*(M)$ and,  if $M$ is not compact, satisfies
\[
[E_{H(\mu)}(\lambda(\mu))] = 0 \in K_0(C^*(M)).
\]
\end{ex} 

Theorem~\ref{t:trivial} is partly motivated by noncommutative geometry approach to the study of topological insulators, in particular, to the integer quantum Hall effect, initiated by Bellissard \cite{Bel86,Bel94}. In physics, $\lambda$ is typically called the Fermi energy of the physical system described by the quantum Hamiltonian $H(\mu)$ and the corresponding spectral projection $E_{H(\mu)}(\lambda)$ is referred to as the Fermi projection. The fact that the energy value $\lambda$ is in a spectral gap of $H(\mu)$ means that at this energy we have an insulator, and it is a topological insulator if  the class of the corresponding Fermi projection in $K$-theory is non-trivial. So, by Theorem~\ref{t:trivial}, in our setting the insulator is not a topological one. 

Such a vanishing result first was proved by Nakamura and Bellissard \cite{Nak+Bel} for the Euclidean plane equipped with uniform magnetic field and periodic potential. Actually, the main purpose in \cite{Nak+Bel} was to show vanishing of the quantum Hall conductance. This result provides a rigorous mathematical treatment of a physical observation made by Tesanovic, Axel and Halperin \cite{TAH} that low energy bands in an ordered or slightly disordered 2D crystal submitted to a uniform magnetic field, do not contribute to the quantum Hall conductance. Recall that, by the Kubo formula, the Hall conductance in the usual model
of the integer quantum Hall effect on the Euclidean plane, also called the Chern number of the Fermi projection, can be naturally interpreted as a pairing of a cyclic 2-cocycle defined on some dense subalgebra of the $C^*$-algebra of observables with the $K$-theory class of the Fermi projection. A similar fact holds for the model of the fractional quantum Hall effect on the hyperbolic plane suggested in \cite{CHMM, MM}. On the other hand, such a cyclic formula can be only derived for integer invariants. It is impossible to have it for torsion invariants, for instance, for the $\mathbb Z_2$-invariant associated to the time-reversal invariant systems (see, for instance, the Kane-Mele model \cite{KM05}). Therefore, a cyclic formula does not arise in general topological insulator systems and instead one should deal with the $K$-theory class of the Fermi projection and $K$-theoretic index pairing directly.

The results of \cite{Nak+Bel} were extended by the first author, Mathai, and Shubin \cite{kms} to the case of an arbitrary regular covering of a compact manifold, periodic magnetic field and electric potential. 

Our motivation is to extend the results of \cite{Nak+Bel,kms} to the non-periodic setting of magnetic Schr\"odinger operators with disordered potentials. As observed by Bellissard \cite{Bel86,Bel94}, non-commutative $C^*$-algebras of observables are needed in order to study disordered systems. It has been realized fairly recently (see, for instance, \cite{EM19,Ku17,KLT22,L23,LT21,LT22}) that Roe algebras, which come from the mathematical subject of coarse geometry, are a particularly good, physically well-motivated
choice here. This explains our choice of the $C^*$-algebra of observables and use of some notions and results from coarse geometry.

\section{Outline of the proof}\label{s:outline}

In this section, we outline  the proof of Theorem~\ref{t:trivial}. As in \cite{Nak+Bel,kms}, it is based on semiclassical approximation and noncommutative geometry tools, but concrete approaches are different. First, we show that the spectrum of the operator $H(\mu)$ on the interval $[0,E_1\mu]$, with $E_1<E_0$, is exponentially close to the spectrum of the Dirichlet realization $H_U(\mu)$ of the operator $H(\mu)$ in the sublevel set $U=\{x\in M\,:\,  V(x) < E_0\}$. 

\begin{thm}\label{t:D}
Under Assumption \ref{a1}, for any $E_1\in (0,E_0)$, there exist $C>0$, $c>0$, and $\mu_2>0$ such that for
any $\mu>\mu_2$
\[
\sigma(H(\mu))\cap [0, E_1 \mu ] \subset \{\lambda\in [0,
E_1\mu ] : {\rm dist}(\lambda, \sigma(H_U(\mu)))<
Ce^{-c\mu^{1/2} }\}.
\]
\end{thm}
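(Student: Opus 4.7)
The plan is to transfer an approximate eigenfunction of $H(\mu)$ at energy $\lambda$ to one of $H_U(\mu)$ by means of an Agmon-type exponential tunneling estimate followed by a cutoff. Given $\lambda\in\sigma(H(\mu))\cap[0,E_1\mu]$, the spectral theorem supplies, for any $\delta>0$, a normalized $\psi\in\mathrm{Dom}(H(\mu))$ with $\|(H(\mu)-\lambda)\psi\|\le\delta$. I will construct $\tilde\psi=\chi\psi\in\mathrm{Dom}(H_U(\mu))$, where $\chi$ is a smooth cutoff supported in $U$ and equal to $1$ on the bulk of $U$, and show that $\|(H_U(\mu)-\lambda)\tilde\psi\|\le Ce^{-c\mu^{1/2}}$ and $\|\tilde\psi\|\ge 1/2$ for $\mu$ large (after taking $\delta$ itself exponentially small). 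Self-adjointness of $H_U(\mu)$ then yields $\mathrm{dist}(\lambda,\sigma(H_U(\mu)))\le 2Ce^{-c\mu^{1/2}}$, which is the assertion.

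The core input is an Agmon-type exponential decay bound for $\psi$ in the classically forbidden region $\{V>E_1\}$. For any bounded Lipschitz weight $\phi$, integration by parts using $\nabla_{\mathbf A}(e^{\phi}\psi)=e^{\phi}(\nabla_{\mathbf A}\psi+(d\phi)\psi)$ yields the magnetic IMS identity
\begin{equation*}
\|\nabla_{\mathbf A}(e^{\phi}\psi)\|^2+\int_M(\mu V-\lambda-|d\phi|^2)\,e^{2\phi}|\psi|^2\,dv=\mathrm{Re}\langle e^{2\phi}\psi,(H(\mu)-\lambda)\psi\rangle.
\end{equation*}
Fix $E_1<E_1'<E_0$ and let $\rho$ be a bounded, globally Lipschitz regularization of the Agmon pseudo-distance to $\{V\le E_1\}$ in the degenerate metric $(V-E_1)_+\,g$, so that $|d\rho|^2\le(V-E_1)_+$ a.e.\ and $\rho\ge c_0>0$ on $\{V\ge E_1'\}$, uniformly across the wells; this uniformity is precisely what Assumptions~\ref{a1}--\ref{a2} and bounded geometry of $M$ afford. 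Setting $\phi=(1-\eta)\mu^{1/2}\rho$ with small $\eta>0$ makes the bracket $\mu V-\lambda-|d\phi|^2$ nonnegative on $\{V\ge E_1\}$ and bounded below there by $\eta\mu(V-E_1)$, while $\phi\equiv 0$ on the allowed region. Cauchy--Schwarz on the right-hand side, absorption into the left-hand side, and the bound $\|\psi\|=1$ yield
\begin{equation*}
\int_{\{V\ge E_1'\}}e^{2\phi}|\psi|^2\,dv\le C,\qquad\text{hence}\qquad \|\psi\|_{L^2(\{V\ge E_1'\})}\le Ce^{-c\mu^{1/2}},
\end{equation*}
provided $\delta\le e^{-C'\mu^{1/2}}$ for a sufficiently large $C'$. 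Rerunning the argument with $\phi$ slightly shrunk and invoking interior elliptic regularity on balls of bounded geometry gives the analogous decay for $\nabla_{\mathbf A}\psi$.

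Choose $\chi\in C^\infty_b(M)$, $0\le\chi\le 1$, with $\chi\equiv 1$ on $\{V\le E_0-2\delta_0\}$ and $\mathrm{supp}\,\chi\subset\{V<E_0-\delta_0\}$, where $\delta_0>0$ satisfies $E_1'<E_0-2\delta_0$; bounded geometry and the smoothness of $V$ make such $\chi$ available with uniformly bounded $d\chi$ and $\nabla^2\chi$. Then $\chi\psi\in\mathrm{Dom}(H_U(\mu))$ and
\begin{equation*}
(H_U(\mu)-\lambda)(\chi\psi)=\chi(H(\mu)-\lambda)\psi+[\nabla_{\mathbf A}^*\nabla_{\mathbf A},\chi]\psi.
\end{equation*}
The first summand has norm $\le\delta$. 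The commutator is a first-order differential operator with bounded coefficients supported in $\{E_0-2\delta_0\le V\le E_0-\delta_0\}\subset\{V\ge E_1'\}$, so by the Agmon decay of both $\psi$ and $\nabla_{\mathbf A}\psi$ its action on $\psi$ has norm $\le Ce^{-c\mu^{1/2}}$. The same decay gives $\|\psi-\chi\psi\|\le Ce^{-c\mu^{1/2}}$, and hence $\|\chi\psi\|\ge 1/2$ for $\mu\ge\mu_0$. Dividing produces an element of $\sigma(H_U(\mu))$ within $Ce^{-c\mu^{1/2}}$ of $\lambda$.

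The main technical obstacle is producing the Agmon estimate with constants $C,c$ uniform over the possibly infinite family of wells $\{U_h\}_{h\in D}$: this is exactly where Assumption~\ref{a2} (uniform diameter and separation of wells) and bounded geometry of $M$ enter, ensuring a globally Lipschitz $\rho$ with a uniform positive lower bound on $\{V\ge E_1'\}$, a uniformly smooth cutoff $\chi$, and uniform elliptic regularity constants. Extending the estimate from exact eigenfunctions to Weyl approximants is routine but requires the approximation error $\delta$ to decay faster than $e^{-C'\mu^{1/2}}$, which is harmless since $\lambda\in\sigma(H(\mu))$ allows $\delta$ to be chosen arbitrarily small.
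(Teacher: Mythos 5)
Your proof is correct, but it runs in the opposite direction from the paper's. You take a point $\lambda\in\sigma(H(\mu))\cap[0,E_1\mu]$, extract a Weyl quasimode $\psi$, establish Agmon decay of $\psi$ (and of $\nabla_{\bf A}\psi$) on $\{V\ge E_1'\}$ via the weighted energy identity, and then cut off to produce a quasimode for $H_U(\mu)$, concluding by the distance-to-spectrum bound for self-adjoint operators. The paper instead proves the contrapositive by gluing a parametrix for the resolvent, $R^\mu(z)=\sum_h\psi_h(H_{U_h}(\mu)-z)^{-1}\phi_h+\psi_0(H_{M_0}(\mu)-z)^{-1}\phi_0$, and showing $(H(\mu)-z)R^\mu(z)=I+K^\mu(z)$ with $\|K^\mu(z)\|$ exponentially small; this requires the weighted resolvent bounds of Propositions~\ref{p:1} and~\ref{p:10}, the latter being the delicate step (it needs $z$ to stay at distance $\ge C_\epsilon^{-1}e^{-\epsilon\mu^{1/2}}$ from $\sigma(H_{U_h}(\mu))$ for every $\epsilon$). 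Your quasimode route is shorter and more elementary for Theorem~\ref{t:D} taken in isolation, and it delivers the stated one-sided inclusion directly; the paper's heavier resolvent machinery is not wasted, however, because the explicit formula \eqref{H-R} is reused in Section~\ref{s:equiv} to compare the spectral projections via the Riesz integral, which your argument would not provide. Two small remarks: the uniform lower bound $\rho\ge c_0$ on $\{V\ge E_1'\}$ needs only the uniform Lipschitz bound on $V$ (from $V\in C^\infty_b(M)$), not Assumption~\ref{a2} --- consistent with the theorem being stated under Assumption~\ref{a1} alone; and the decay of $\nabla_{\bf A}\psi$ already follows from the term $\|\nabla_{\bf A}(e^{\phi}\psi)\|^2$ in your identity together with $|d\phi|^2\le(1-\eta)^2\mu(V-E_1)_+$, so the appeal to interior elliptic regularity is unnecessary.
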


The proof of Theorem~\ref{t:D} is given in Section~\ref{s:D}.

Our approach to semiclassical approximation is different from the approaches of \cite{Nak+Bel,kms}. We follow the approach to the study of the tunneling effect in multi-well problems developed by Helffer and Sj\"ostrand for Schr\"odinger operators with electric potentials (see for instance \cite{HSI,HSII}) and extended to magnetic Schr\"odinger operators in \cite{HS87,HM96}. Since $H(\mu)$ is not with compact resolvent, we work not with individual eigenfunctions as in \cite{HSI}, but with resolvents, using the strategy developed in \cite{HSII,HS88,Di-Sj,Carlsson} for the case of electric potential and in \cite{Frank,HK08,higherLL} for the case of magnetic field.

Next, we construct semiclassical approximation at the level of spectral projections. We suppose that  Assumption \ref{a1} holds with the fixed $E_0$ and $E_1$ satisfies
\begin{equation}\label{e:epsilon1}
\sup_{h\in D}m_{U_h}(V)<E_1<E_0.
\end{equation} 
For any $E<E_0$, we set 
\begin{equation} \label{e:UEh}
U_{E,h}= \{x\in U_h\,:\,  V(x) < E\}.
\end{equation}
The condition~\eqref{e:epsilon1} ensures that $U_{E_1,h}\neq \emptyset$ for any $h\in D$.

Take any $\eta>0$ such that $E_1+3\eta<E_0$ and, for any $h\in D$, fix a function $\phi_h\in C_c^\infty(U_h)$ such that $\supp \phi_h \subset U_{E_1+2\eta,h}$, $\phi_h\equiv 1$ on $U_{E_1+\eta,h}$, and the family $\{\phi_h : h\in D\}$ is bounded in $C^\infty_b(M)$ (we refer to the proof of Proposition \ref{p:D} for a construction of such a family).

For $\lambda\in \RR$, let $E_{H_{U_h}(\mu)}(\lambda) = {\chi}_{(-\infty, \lambda]}(H_{U_h}(\mu))$ denote the spectral projection of $H_{U_h}(\mu)$, corresponding to the interval $(-\infty,\lambda]$. The image of the operator $\phi_h E_{H_{U_h}(\mu)}(\lambda)$ is a finite dimensional subspace of $L^2(U_h)$, which can be considered as a finite dimensional subspace $\mathcal H_{U_h,\mu} (\lambda)$ of $L^2(M)$ for any $\mu>0$. Set 
\[
\mathcal H_{U,\mu}(\lambda)=\bigoplus_{h\in D} \mathcal H_{U_h,\mu} (\lambda)\subset L^2(M)
\]
and denote by $P_{\mathcal H_{U,\mu}(\lambda)}$ the orthogonal projection on $\mathcal H_{U,\mu}(\lambda)$ in $L^2(M)$:
\[
P_{\mathcal H_{U,\mu}(\lambda)}=\sum_{h\in D}P_{\mathcal H_{U_h,\mu} (\lambda)},
\]
where $P_{\mathcal H_{U_h,\mu} (\lambda)}$ is the orthogonal projection on $\mathcal H_{U_h,\mu} (\lambda)$ in $L^2(U_h)$.

\begin{thm} \label{t:equiv}
Let Assumption \ref{a1} hold with the fixed $E_0$ and let $E_1$ satisfy \eqref{e:epsilon1}.  Let $\lambda(\mu) \in (0,E_1\mu)$ be such that, for any $\epsilon >0$\,, there exists $C_\epsilon>0$ such that 
\[
{\rm dist}\,(\lambda(\mu), \sigma(H_{U}(\mu)))\geq
\frac{1}{C_\epsilon}e^{-\epsilon \mu^{1/2}}
\]
for all sufficiently large $\mu$. Then there exists  a constant $\mu_3 >0$ such that for all $\mu> \mu_3$, the projections $E_{H(\mu)}(\lambda)$ and $P_{\mathcal H_{U,\mu} (\lambda)}$ are in $C^*(M)$ and are Murray-von Neumann equivalent in $C^*(M)$. In particular,
\[
[E_{H(\mu)}(\lambda)]=[P_{\mathcal H_{U,\mu} (\lambda)}] \in
K_0(C^*(M)).
\]
 \end{thm}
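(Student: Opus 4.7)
The plan has three stages: first, verify that both $E_{H(\mu)}(\lambda)$ and $P_{\mathcal H_{U,\mu}(\lambda)}$ belong to $C^*(M)$; second, show that $\|E_{H(\mu)}(\lambda)-P_{\mathcal H_{U,\mu}(\lambda)}\|$ is exponentially small in $\mu^{1/2}$; third, invoke the standard fact that two projections in a $C^*$-algebra whose norm difference is less than $1$ are Murray--von Neumann equivalent inside it.

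For Step 1, the operator $P_{\mathcal H_{U,\mu}(\lambda)}=\sum_{h\in D}P_{\mathcal H_{U_h,\mu}(\lambda)}$ is a direct sum of finite-rank operators supported in $\supp \phi_h\subset U_h$; by Assumption~\ref{a2} and bounded geometry it has uniformly finite propagation (bounded by $\sup_h\mathrm{diam}(U_h)$) and is locally compact (each ball meets only finitely many wells, each yielding a finite-rank summand), hence lies in $C^*(M)$. For $E_{H(\mu)}(\lambda)$, the spectral gap from Theorem~\ref{t:trivial} (established earlier in the paper) lets us write $E_{H(\mu)}(\lambda) = f(H(\mu))$ for some smooth $f$ that is identically $1$ on $\sigma(H(\mu))\cap(-\infty,\lambda]$ and $0$ on $\sigma(H(\mu))\cap[\lambda',\infty)$ for some $\lambda'<b(\mu)$; standard Helffer--Sj\"ostrand functional calculus combined with finite propagation speed for the wave equation of $H(\mu)$, together with ellipticity for local compactness, place this operator in $C^*(M)$.

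Step 2 is the heart of the argument. I would adapt the multi-well resolvent strategy of Helffer--Sj\"ostrand (\cite{HSII,Di-Sj,HM96}) to the present bounded-geometry magnetic setting. Pick $\tilde\phi_h\in C_c^\infty(U_h)$ with $\tilde\phi_h\equiv 1$ on $\supp\phi_h$ and $\supp\tilde\phi_h\subset U_{E_1+3\eta,h}$, and form the parametrix
\[
R_U(z;\mu)=\sum_{h\in D}\tilde\phi_h(H_{U_h}(\mu)-z)^{-1}\phi_h.
\]
A direct commutator computation gives $(H(\mu)-z)R_U(z;\mu)=I+K(z;\mu)$, where $K(z;\mu)$ is supported in the transition region $\{E_1+2\eta\le V\le E_1+3\eta\}$, classically forbidden at energy $E_1\mu$ for large $\mu$. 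Agmon-type weighted estimates for the magnetic Schr\"odinger operator (following \cite{HS87,HM96}) give $\|K(z;\mu)\|\le Ce^{-c\mu^{1/2}}\mathrm{dist}(z,\sigma(H_U(\mu)))^{-1}$ uniformly on a contour $\gamma$ that encloses $\sigma(H(\mu))\cap[0,\lambda(\mu)]$ and keeps distance at least $\tfrac12 C_\epsilon^{-1}e^{-\epsilon\mu^{1/2}}$ from $\sigma(H_U(\mu))$; the existence of such $\gamma$ follows from Theorem~\ref{t:D} and the hypothesis on $\lambda(\mu)$. Integrating the Riesz formula around $\gamma$ and choosing $\epsilon$ small yields
\[
\Bigl\|E_{H(\mu)}(\lambda)-\sum_{h\in D}\phi_h E_{H_{U_h}(\mu)}(\lambda)\phi_h\Bigr\|\le Ce^{-c\mu^{1/2}}.
\]
A separate Agmon estimate on eigenfunctions of $H_{U_h}(\mu)$ with eigenvalue in $[0,E_1\mu]$ shows these decay exponentially outside $U_{E_1,h}\subset\{\phi_h\equiv 1\}$, so $\phi_h$ acts as the identity on $\mathcal H_{U_h,\mu}(\lambda)$ up to $O(e^{-c\mu^{1/2}})$, implying that $\sum_h\phi_h E_{H_{U_h}(\mu)}(\lambda)\phi_h$ is exponentially close to $P_{\mathcal H_{U,\mu}(\lambda)}$; combining gives the desired norm estimate.

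For Step 3, once the norm difference is less than $1$, in the unitization of $C^*(M)$ the element $v=E_{H(\mu)}(\lambda)P_{\mathcal H_{U,\mu}(\lambda)}+(1-E_{H(\mu)}(\lambda))(1-P_{\mathcal H_{U,\mu}(\lambda)})$ is invertible; its polar decomposition produces a unitary $u$ in the unitization with $uP_{\mathcal H_{U,\mu}(\lambda)}u^*=E_{H(\mu)}(\lambda)$, and $w=uP_{\mathcal H_{U,\mu}(\lambda)}$ is a partial isometry in $C^*(M)$ implementing the Murray--von Neumann equivalence, whence $[E_{H(\mu)}(\lambda)]=[P_{\mathcal H_{U,\mu}(\lambda)}]$ in $K_0(C^*(M))$. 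I expect the principal obstacle to be the contour estimate in Step 2: uniformly controlling the Agmon-weighted norm of $K(z;\mu)$ on a contour forced to lie inside a possibly exponentially narrow gap while staying exponentially far from $\sigma(H_U(\mu))$ requires a delicate interplay between the magnetic Agmon metric at scale $\mu^{-1/2}$ and the hypothesis $\mathrm{dist}(\lambda(\mu),\sigma(H_U(\mu)))\ge C_\epsilon^{-1}e^{-\epsilon\mu^{1/2}}$.
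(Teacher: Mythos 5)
Your overall strategy is the same as the paper's: build a parametrix from the Dirichlet resolvents on the wells, run the Riesz formula over a contour kept at distance $\gtrsim e^{-\epsilon\mu^{1/2}}$ from $\sigma(H_U(\mu))$, conclude that $\|E_{H(\mu)}(\lambda)-P_{\mathcal H_{U,\mu}(\lambda)}\|=\mathcal O(e^{-c\mu^{1/2}})$, and then invoke the standard fact that projections at distance less than $1$ are Murray--von Neumann equivalent. However, there is a concrete error in Step~2: with $R_U(z;\mu)=\sum_h\tilde\phi_h(H_{U_h}(\mu)-z)^{-1}\phi_h$ one computes
\[
(H(\mu)-z)R_U(z;\mu)=\sum_{h\in D}\phi_h+\sum_{h\in D}[H(\mu),\tilde\phi_h](H_{U_h}(\mu)-z)^{-1}\phi_h,
\]
and $\sum_h\phi_h$ is \emph{not} the identity --- it vanishes on the classically forbidden region $\{V\geq E_1+2\eta\}$. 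So the claimed identity $(H(\mu)-z)R_U=I+K$ with $K$ small is false, and you cannot invert to recover $(H(\mu)-z)^{-1}$. The fix is exactly what the paper does: set $\phi_0=1-\sum_h\phi_h$ and add the exterior term $\psi_0(H_{M_0}(\mu)-z)^{-1}\phi_0$, where $M_0$ is the region $\{V\geq E_1+\eta\}$; Proposition~\ref{p:1} controls this resolvent since $\mathrm{Re}\,z<E_1\mu$ there, and its Riesz integral vanishes because it is holomorphic inside the contour.

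Two further steps are glossed over. First, integrating $R_U$ along $\gamma$ produces $\sum_h\tilde\phi_h E_{H_{U_h}(\mu)}(\lambda)\phi_h$ with the \emph{outer} cutoff $\tilde\phi_h$, not $\phi_h$; passing to $\sum_h\phi_h E_{H_{U_h}(\mu)}(\lambda)\phi_h$ requires showing $(\tilde\phi_h-\phi_h)E_{H_{U_h}(\mu)}(\lambda)\phi_h=\mathcal O(e^{-c\mu^{1/2}})$, which needs a separate weighted resolvent estimate (Proposition~\ref{p:10}) with a weight vanishing on $\supp\phi_h$ and bounded below on $\supp(\tilde\phi_h-\phi_h)$. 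Second, your claim that ``$\phi_h$ acts as the identity on $\mathcal H_{U_h,\mu}(\lambda)$ up to $\mathcal O(e^{-c\mu^{1/2}})$'' must be summed over the $N_{h,\mu}(\lambda)=\dim E_{H_{U_h}(\mu)}(\lambda)$ eigenfunctions (via the Gram matrix of $\{\phi_hu_{j,h,\mu}\}$), so one needs the uniform polynomial bound $N_{h,\mu}(\lambda)\leq C\mu^{d}$ of Lemma~\ref{l:Weyl} to keep the total error exponentially small. These are fixable within your framework, but as written they are missing ingredients rather than routine details.
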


The proof of Theorem~\ref{t:equiv} is given in Section~\ref{s:equiv}.

Finally, we suppose that Assumptions \ref{a1} and \ref{a3} hold and $E_1$ and $(a(\mu),b(\mu))$ are given by Assumption \ref{a3}. {Without loss of generality, we may assume that $E_1$ satisfies \eqref{e:epsilon1}}. With the constants $C, c, \mu_2>0$ given by Theorem \ref{t:D}, we infer that for any $\mu>\max(\mu_0,\mu_2)$ the interval $(a(\mu)+Ce^{-c\mu^{1/2}},b(\mu)-Ce^{-c\mu^{1/2 }})$ is not in the spectrum of $H(\mu)$. Moreover, by Theorem~\ref{t:equiv}, for any $\lambda$ in this interval and $\mu$ sufficiently large, the corresponding spectral projection $E_{H(\mu)}(\lambda)$ belongs to $C^*(M)$ and is Murray-von Neumann equivalent to the projection $P_{\mathcal H_{U,\mu} (\lambda)}$. In particular,
\[
[E_{H(\mu)}(\lambda)]=[P_{\mathcal H_{U,\mu} (\lambda)}] \in K_0(C^*(M)).
\]

To complete the proof of Theorem~\ref{t:trivial},  we show that, if $M$ is not compact, the  class $[P_{\mathcal H_{U,\mu} (\lambda)}] \in K_0(C^*(M))$ is trivial. Unlike the periodic setting, the triviality of this class is not immediate and quite delicate. Here we slightly extend our previous results of \cite{wannier} on triviality of generalized Wannier projections associated with discrete subsets of manifolds of bounded geometry. 

We consider the following, more general setting.  
Let $\mathcal U=\{\mathcal U_{h} : h \in D\}$ be a family of relatively compact domains (with smooth boundary) in $M$.  Assume that  the diameters of $\mathcal U_{h}, h \in D$, are uniformly bounded:
\begin{equation}\label{e:udiam}
\sup_{h\in D}{\rm diam}\,(\mathcal U_{h})=\delta<\infty,
\end{equation}
and the family is uniformly discrete:
\begin{equation}\label{e:udiscr}
\inf_{h_1,h_2\in D, h_1\neq h_2}{\rm dist}\,(\mathcal U_{h_1}, \mathcal U_{h_2})=\epsilon>0.
\end{equation}
Note that here we don't assume that the family $\mathcal U$ is related with the potential $V$.

For any $h\in D$, let $H_h$ be a finite-dimensional subspace in $L^2(M)$, $\dim H_h=n_h$, such that each $\phi\in H_h$ is supported in $\mathcal U_h$. We assume that 
\begin{equation}
\label{e:defn}
n:=\max_{h\in D}n_h <\infty.
\end{equation}

Let $H_{\mathcal U}\subset L^2(M)$ be the closure of the linear span of the union of all $H_h$, $h\in D$. Thus, we have a direct sum decomposition
\[
H_{\mathcal U}=\bigoplus_{h\in D} H_h.
\]
Let $P_{H_{\mathcal U}}$ denote the  orthogonal projection onto $H_{\mathcal U}$.  

\begin{thm}\label{t:PHU}
The projection $P_{\mathcal H_{\cU}}$ is in $C^*(M)$ and, if $M$ is not compact, 
\[
[P_{\mathcal H_{\cU}}] = 0
\in K_0(C^*(M)).
\]
\end{thm}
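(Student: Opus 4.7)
The plan has three stages: (i) verify $P_{H_\cU}\in C^*(M)$, (ii) reduce via Murray--von Neumann equivalences inside $C^*(M)$ to a ``standard'' reference projection $P_0$ depending only on $\cU$, and (iii) trivialize $[P_0]$ using an extension of the Wannier projection technique of \cite{wannier}. For (i), note that each $P_{H_h}$ is a finite-rank projection whose integral kernel is supported in $U_h\times U_h$, so by \eqref{e:udiam} it has propagation $\leq\delta$. By \eqref{e:udiscr} and bounded geometry of $M$, only finitely many $U_h$ meet any bounded ball, so the pairwise orthogonal sum $P_{H_\cU}=\sum_{h\in D}P_{H_h}$ converges strongly and still has propagation $\leq\delta$. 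Local compactness follows because $fP_{H_\cU}$ is a finite sum of compact operators for $f\in C_c(M)$, and $C_c(M)$ is norm-dense in $C_0(M)$.

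For (ii) I first reduce to the case $n_h\equiv n$ by enlarging each $H_h$ to an $n$-dimensional $\widetilde H_h\subset L^2(U_h)$ via $n-n_h$ auxiliary orthonormal functions orthogonal to $H_h$ (for instance, the first $n-n_h$ Dirichlet Laplacian eigenfunctions on $U_h$, which form a uniformly bounded family by \eqref{e:udiam} and bounded geometry). Then $P_{\widetilde H_\cU}-P_{H_\cU}$ is again a projection of the form treated in the theorem, so $[P_{H_\cU}]=[P_{\widetilde H_\cU}]-[P_{\widetilde H_\cU}-P_{H_\cU}]$, and it suffices to prove vanishing for constant rank $n$. In that case, fix a uniform reference family $\{e_{h,i}\}_{i=1}^n\subset L^2(U_h)$ and set $P_0=\sum_{h,i}|e_{h,i}\rangle\langle e_{h,i}|$. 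Choosing an orthonormal basis $\{\psi_{h,i}\}$ of each $H_h$, the partial isometry
\[
V=\sum_{h\in D}\sum_{i=1}^{n}|\psi_{h,i}\rangle\langle e_{h,i}|
\]
has propagation $\leq\delta$ and is locally compact, so lies in $C^*(M)$; since $VV^*=P_{H_\cU}$ and $V^*V=P_0$, we obtain $[P_{H_\cU}]=[P_0]$ in $K_0(C^*(M))$.

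Stage (iii) is the main obstacle. The projection $P_0$ is a generalized Wannier projection associated with the uniformly discrete family $\cU$, and vanishing of its $K$-class is an extension from point sets to bounded-diameter domains of the main theorem of \cite{wannier}. The key geometric input is bounded geometry of $M$ combined with \eqref{e:udiscr}: these provide a uniform ``halo'' around each $U_h$ that is disjoint from every other $U_{h'}$, leaving enough room to construct explicit partial isometries in $C^*(M)$ that trivialize $[P_0]$ (morally, by a coarse-geometric shift-and-absorb argument within the Roe algebra). I expect the bulk of the work to be in extending the constructions of \cite{wannier}---originally phrased for discrete point subsets---to the setting where each ``site'' is a bounded but genuinely $d$-dimensional domain carrying a finite-dimensional Hilbert space, while keeping all intermediate operators in $C^*(M)$. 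Combining the three stages yields $[P_{H_\cU}]=0$ in $K_0(C^*(M))$, as claimed.
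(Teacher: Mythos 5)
Your stages (i) and (ii) are fine as far as they go (stage (i) is essentially the paper's argument, and the partial isometry $V$ built from orthonormal bases is a legitimate Murray--von Neumann equivalence in $C^*(M)$), but there is a genuine gap: stage (iii) is the entire content of the theorem, and you have not supplied it. Saying that $[P_0]=0$ should follow ``morally, by a coarse-geometric shift-and-absorb argument'' and that you ``expect the bulk of the work to be in extending the constructions of \cite{wannier}'' is a statement of intent, not a proof. The paper carries out exactly this extension, and the mechanism matters. One picks a point in each $U_h$, obtaining a uniformly discrete subset $D\subset M$, and then --- crucially --- enlarges $D$ to a maximal $(2\delta+\epsilon)$-separated subset $D'\supset D$, which is a net in $M$ and hence coarsely equivalent to $M$ (the added points are given small balls as their domains). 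Completing the bases of the $H_h$ to orthonormal bases of the spaces $L^2(U_h)$ yields an isometry $l^2(D')\otimes\mathbb C^n\to L^2(M)$ inducing a $*$-homomorphism $C^*(D')\otimes M_n(\mathbb C)\to C^*(M)$ that carries the diagonal projection $p\in C_b(D',M_n(\mathbb C))$, with $p(h)$ of rank $n_h$ for $h\in D$ and $p(h)=0$ for $h\in D'\setminus D$, to $P_{\mathcal H_{\cU}}$. The vanishing of $(\gamma_H)_*([p])$ is then Theorems 4, 7 and 12 of \cite{wannier} (the ray-structure swindle on $D'$). Your reference projection $P_0$ lives only over $\bigcup_h U_h$, and without the enlargement to a coarsely dense net there is no room to set up the shift: the wells may be sparse in $M$, the Roe algebra of $D$ alone does not see $K_0(C^*(M))$, and the rays along which one absorbs must run through all of $M$.

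A second, smaller problem: your reduction to constant rank in stage (ii) is circular. You write $[P_{H_{\cU}}]=[P_{\widetilde H_{\cU}}]-[P_{\widetilde H_{\cU}}-P_{H_{\cU}}]$ and claim it suffices to treat constant rank $n$; but $P_{\widetilde H_{\cU}}-P_{H_{\cU}}$ has rank $n-n_h$ over the site $h$, which is again non-constant, so you are back where you started. Fortunately the reduction is unnecessary: the result from \cite{wannier} that the paper invokes kills the class of an arbitrary projection in $M_n(C_b(D'))$, constant rank or not.
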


The proof of Theorem~\ref{t:PHU} is given in Section~\ref{s:PHU}. 

Application of Theorem~\ref{t:PHU} in the case $\mathcal U_h=U_h$ and $H_h = \mathcal H_{U_h,\mu} (\lambda)$ completes the proof of Theorem~\ref{t:trivial}. The condition~\eqref{e:defn} holds in this case due to Lemma \ref{l:Weyl}.

{Note that if $M$ is compact then $C^*(M)$ is the $C^*$-algebra $\mathbb K(L^2(M))$ of compact operators on $L^2(M)$, $K_0(C^*(M))\cong \mathbb Z$, and the class $[P_{\mathcal H_{\cU}}]$ is just the rank of $P_{\mathcal H_{\cU}}$. In this case, Theorem~\ref{t:PHU} does not hold: $[P_{\mathcal H_{\cU}}]$ does not vanish but it encodes no non-trivial geometric information.}

\section{Proof of Theorem~\ref{t:D}}\label{s:D}
\subsection{Weighted $L^2$ spaces} 
Let $W$ be an open domain (with smooth boundary) in $M$.
Denote by $C^{0,1}(\overline{W},\RR)$ the class of uniformly
Lipschitz continuous, real-valued functions on $\overline W$. 
For any $\Phi\in C^{0,1}(\overline{W},\RR)$ and $\mu>0$ define the Hilbert
space
\[
L^2_{\mu^{1/2}\Phi}(W)=\{u\in L^2_{loc}(W) : e^{\mu^{1/2}\Phi}u\in
L^2(W)\}
\]
with the norm
\[
\|u\|_{\mu^{1/2}\Phi}=\|e^{\mu^{1/2}\Phi} u\|, \quad u\in
L^2_{\mu^{1/2}\Phi}(W),
\]
where $\|\cdot\|$ denotes the norm in $L^2(W)$:
\[
\|u\|=\left(\int_W|u(x)|^2\,dx\right)^{1/2}, \quad u\in L^2(W).
\]
By $\|\cdot\|_{\mu^{1/2}\Phi}$ we will also denote the norm of a
bounded operator in $L^2_{\mu^{1/2}\Phi}(W)$.

Denote by $H_W(\mu)$ the Dirichlet realization of the operator $H(\mu)$ in $W$.
Recall the following important identity (cf. for instance \cite[Theorem 1.1]{HS87}).

\begin{lem}\label{l:1.1}
Let $W\subset M$ be an open domain (with $C^2$ boundary) and
$\Phi\in C^{0,1}(\overline{W},\RR)$. For any $\mu>0$, $z\in\CC$ and
$u\in \Dom (H_W(\mu))$ one has
\begin{multline}\label{e:energy}
{\rm Re}\, \int_{W} e^{2\mu^{1/2}\Phi} (H(\mu)-z)u \cdot \bar u\,dx  =
 \int_W |\nabla_{\bf A}(e^{\mu^{1/2}\Phi}u)|^2\,dx \\ +\int_{W} e^{2\mu^{1/2}\Phi}(\mu(V-
|\nabla\Phi|^2) - {\rm Re}\, z)|u|^2\,dx.
\end{multline}
\end{lem}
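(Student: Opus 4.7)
The plan is to verify the identity by a direct weighted-$L^2$ computation, organized around the gauge-like substitution $v := e^{\mu^{1/2}\Phi}u$. The key algebraic fact, obtained from the product rule $\nabla_{\bf A}(fu)=(df)u+f\nabla_{\bf A} u$ for a real scalar function $f$, is
\[
\nabla_{\bf A} v \;=\; e^{\mu^{1/2}\Phi}\bigl(\nabla_{\bf A} u + \mu^{1/2}(\nabla\Phi)\,u\bigr),
\]
so that $|\nabla_{\bf A} v|^2$ expands pointwise into three contributions: $e^{2\mu^{1/2}\Phi}|\nabla_{\bf A} u|^2$, the cross term $2\mu^{1/2}e^{2\mu^{1/2}\Phi}\mathrm{Re}\bigl(\nabla_{\bf A} u\cdot\overline{(\nabla\Phi)u}\bigr)$, and $\mu\, e^{2\mu^{1/2}\Phi}|\nabla\Phi|^2|u|^2$. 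This pointwise identity is what will produce the term $|\nabla_{\bf A}(e^{\mu^{1/2}\Phi}u)|^2$ on the right of \eqref{e:energy}, together with the shift $-\mu|\nabla\Phi|^2$ inside the effective potential.

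The second step is to rewrite the left-hand side using integration by parts. Since $u \in \Dom(H_W(\mu))$ satisfies the Dirichlet condition on $\partial W$ and the weight $e^{2\mu^{1/2}\Phi}$ is bounded and Lipschitz on $\overline{W}$, the function $e^{2\mu^{1/2}\Phi}u$ lies in $H^1_0(W,\mathcal L)$, so Green's formula for $\nabla_{\bf A}^*\nabla_{\bf A}$ applies with no boundary term:
\[
\int_W e^{2\mu^{1/2}\Phi}\,\nabla_{\bf A}^*\nabla_{\bf A} u\cdot\bar u\,dx \;=\; \int_W \nabla_{\bf A} u\cdot\overline{\nabla_{\bf A}(e^{2\mu^{1/2}\Phi}u)}\,dx.
\]
Expanding $\nabla_{\bf A}(e^{2\mu^{1/2}\Phi}u)$ by the same product rule and taking the real part produces exactly $\int_W e^{2\mu^{1/2}\Phi}|\nabla_{\bf A} u|^2\,dx$ together with the same cross term that appeared in the expansion of $|\nabla_{\bf A} v|^2$ above. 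Subtracting, the cross term cancels, and one is left with
\[
\mathrm{Re}\int_W e^{2\mu^{1/2}\Phi}\,\nabla_{\bf A}^*\nabla_{\bf A} u\cdot\bar u\,dx \;=\; \int_W|\nabla_{\bf A}(e^{\mu^{1/2}\Phi}u)|^2\,dx \;-\; \mu\int_W e^{2\mu^{1/2}\Phi}|\nabla\Phi|^2|u|^2\,dx.
\]
Adding the trivial contributions of $\mu V|u|^2$ and $-\mathrm{Re}\,z\,|u|^2$, each weighted by $e^{2\mu^{1/2}\Phi}$, then collapses the expression into exactly \eqref{e:energy}.

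The only delicate issue is the low regularity of $\Phi$: it is merely Lipschitz, so $\nabla\Phi$ is defined only almost everywhere by Rademacher's theorem and lies in $L^\infty(W)$. This is not a genuine obstacle, because $e^{2\mu^{1/2}\Phi}$ is bounded and Lipschitz, multiplication by it preserves $H^1_0(W,\mathcal L)$, and the weak product/chain rules supply the identities above a.e. To be entirely safe one can mollify $\Phi$ to obtain $\Phi_\varepsilon \in C^\infty_b(\overline{W},\RR)$ converging uniformly to $\Phi$ with gradients bounded uniformly in $L^\infty$, carry out the computation for smooth $\Phi_\varepsilon$ where everything is classical, and then pass to the limit $\varepsilon\to 0$ using dominated convergence. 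I expect no further difficulty: the whole proof is the two-line pointwise expansion of $|\nabla_{\bf A} v|^2$ combined with a single application of Green's formula.
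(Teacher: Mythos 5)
Your computation is correct, and it is essentially the standard proof of this Agmon-type energy identity: the paper does not prove Lemma~\ref{l:1.1} at all, but cites it from Helffer--Sj\"ostrand \cite{HS87}, where the same integration-by-parts argument appears. Your two ingredients are exactly the right ones: Green's formula for the form of the Dirichlet realization with the test function $e^{2\mu^{1/2}\Phi}u$ (legitimate since $\Dom(H_W(\mu))$ lies in the form domain $H^1_0(W,\mathcal L)$ and multiplication by a bounded Lipschitz weight preserves $H^1_0$), and the pointwise expansion of $|\nabla_{\bf A}(e^{\mu^{1/2}\Phi}u)|^2$; both produce the cross term $2\mu^{1/2}\,\mathrm{Re}\int_W e^{2\mu^{1/2}\Phi}\,\bar u\,\nabla\Phi\cdot\nabla_{\bf A}u\,dx$, so subtracting cancels it and leaves the shift $-\mu|\nabla\Phi|^2$ in the effective potential, after which the $\mu V$ and $-\mathrm{Re}\,z$ terms are added trivially. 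The treatment of the Lipschitz regularity of $\Phi$ (Rademacher plus mollification) is also fine.

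One caveat deserves a sentence in your write-up: you assert that $e^{2\mu^{1/2}\Phi}$ is bounded on $\overline W$, but $C^{0,1}(\overline W,\RR)$ as defined in the paper only means uniformly Lipschitz, and the lemma is later applied with unbounded $W$ (for instance $W=M_0$ in Proposition~\ref{p:1}) and $\Phi$ an Agmon distance, which grows linearly; mollification repairs regularity but not unboundedness, and for general $u\in\Dom(H_W(\mu))$ the weighted integrals need not converge. The routine fix is to prove the identity first for bounded weights, e.g.\ the truncations $\Phi_m=\min(\Phi,m)$, which are still uniformly Lipschitz with $|\nabla\Phi_m|\le|\nabla\Phi|$ a.e., and then either pass to the limit $m\to\infty$ in the resulting inequalities (as in Propositions~\ref{p:1} and~\ref{p:10}) or restrict to $u$ with $e^{\mu^{1/2}\Phi}u\in L^2_{}(W)$, understanding that otherwise both sides may be infinite. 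With that adjustment your argument is complete.
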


\subsection{Estimates away from the wells}
 Let
\[
m_W(V)=\inf \{V(x)\, :\, x\in W\}.
\]  
For any $E\geq m_W(V)$, consider the (degenerate) Agmon metric
\[
g_E=[V(x)-E]_+\cdot g,
\]
where, for any $\lambda\in \RR$, $\lambda_+=\max(\lambda,0)$. Let $d_E(x,y)$ be the associated
distance function on $W$.

Introduce the following class of weights
\[
\cW_E(\overline{W})=\{\Phi\in C^{0,1}(\overline{W},\RR) :
\underset{x\in \overline{W}} {\operatorname{ess-inf}} (V(x)-E-|\nabla\Phi(x)|^2)
>0\}.
\]
Examples of functions in the class $\cW_E(\overline{W})$ are given
by the functions $\Phi(x)=(1-\epsilon)d_E(x,X)$, with 
$0<\epsilon \leq 1$ and $X\subset W$ and $\Phi(x)=d_{E_1}(x,X)$, with 
$E_1>E$ and $X\subset W$. 

 Let $W\subset  M$ be an open domain (with a smooth boundary) such that $m_{W}(V)>0$.

\begin{prop}\label{p:1}
Let $\Phi\in \cW_E(\overline{W})$. Assume that $K(\mu)$ is a bounded
subset in $\CC$ such that $K(\mu)\subset \{z\in \CC : {\rm Re}\,z < \mu 
(E-\alpha)\}$ for some $E\geq m_W(V)$ and $\alpha>0$. If $\mu>0$ is large enough,
then $K(\mu)\cap \sigma(H_W(\mu))=\emptyset\,$, and for any $z\in K(\mu)$
the operator $(H_W(\mu)-z)^{-1}$ defines a bounded operator in
$L^2_{\mu^{1/2}\Phi}(W)$ with
\[
\|(H_W(\mu)-z)^{-1}\|_{\mu^{1/2}\Phi}\leq \frac{1}{\alpha \mu}, \quad z\in K(\mu).
\]
\end{prop}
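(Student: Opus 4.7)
The only tool needed is the energy identity of Lemma~\ref{l:1.1}, combined with the defining property of the weight class $\cW_E(\overline W)$. The plan proceeds in three steps.

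First I would observe that the mere existence of a function in $\cW_E(\overline W)$ already produces a crude $L^2$-resolvent bound. Since $|\nabla\Phi|^2\geq 0$, the defining condition forces $V\geq E+\delta$ a.e.\ on $W$ for some $\delta>0$, and because $\nabla_{\bf A}^*\nabla_{\bf A}\geq 0$ this gives $H_W(\mu)\geq \mu(E+\delta)$ in the quadratic-form sense. Combined with $K(\mu)\subset\{\operatorname{Re} z<\mu(E-\alpha)\}$, this forces $K(\mu)\cap\sigma(H_W(\mu))=\emptyset$, so $(H_W(\mu)-z)^{-1}$ exists as a bounded operator on $L^2(W)$.

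Next I would plug $u\in \Dom(H_W(\mu))$ into \eqref{e:energy}. The gradient term on the right-hand side is nonnegative; for the potential term I would rewrite
\[
\mu\bigl(V-|\nabla\Phi|^2\bigr)-\operatorname{Re} z=\mu\bigl(V-E-|\nabla\Phi|^2\bigr)+\bigl[\mu(E-\alpha)-\operatorname{Re} z\bigr]+\mu\alpha\geq \mu\alpha,
\]
since both bracketed quantities are nonnegative. Bounding the left-hand side of \eqref{e:energy} by Cauchy--Schwarz then delivers the a priori estimate
\[
\|u\|_{\mu^{1/2}\Phi}\leq \frac{1}{\mu\alpha}\,\|(H(\mu)-z)u\|_{\mu^{1/2}\Phi}.
\]

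The final step upgrades this into the norm bound on $(H_W(\mu)-z)^{-1}$ in $L^2_{\mu^{1/2}\Phi}(W)$. For $f\in C_c^\infty(W)$ I would set $u=(H_W(\mu)-z)^{-1}f\in \Dom(H_W(\mu))$; to avoid integrability issues when $\Phi$ is unbounded, I would apply the a priori estimate with the truncation $\Phi_R=\min(\Phi,R)$, which is bounded, Lipschitz, and still lies in $\cW_E(\overline W)$ because $|\nabla\Phi_R|\leq |\nabla\Phi|$ a.e. Letting $R\to\infty$ by monotone convergence yields the weighted bound on $u$, and density of $C_c^\infty(W)$ in $L^2_{\mu^{1/2}\Phi}(W)$ then extends $(H_W(\mu)-z)^{-1}$ uniquely to a bounded operator on the weighted space with norm at most $1/(\mu\alpha)$. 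The main (mild) obstacle is exactly this transfer step: one must check both that the truncated weight remains admissible and that the extension to $L^2_{\mu^{1/2}\Phi}(W)$ really is a genuine inverse of $H_W(\mu)-z$, not just some bounded operator agreeing with the $L^2$-resolvent on $L^2\cap L^2_{\mu^{1/2}\Phi}$. Everything else follows mechanically from Lemma~\ref{l:1.1}.
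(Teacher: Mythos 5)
Your proposal is correct and follows essentially the same route as the paper: the energy identity of Lemma~\ref{l:1.1}, discarding the nonnegative gradient term, and the pointwise lower bound $\mu(V-|\nabla\Phi|^2)-{\rm Re}\,z\geq\alpha\mu$ forced by the definition of $\cW_E(\overline{W})$, followed by Cauchy--Schwarz. The paper simply declares that this a priori estimate ``immediately completes the proof,'' so your additional care with the truncation $\Phi_R=\min(\Phi,R)$ and the passage to the weighted space only makes explicit a step the authors leave implicit.
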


\begin{proof}
By  Lemma~\ref{l:1.1}, for any $z\in K(\mu)$ and
$u\in \Dom (H_W(\mu))$,
we have
\begin{multline*}
{\rm Re}\, \int_{W} e^{2\mu^{1/2}\Phi} (H(\mu)-z)u \cdot \bar u\,dx  
\\ 
\begin{aligned}
& \geq \int_{W} e^{2\mu^{1/2}\Phi}(\mu (V-
|\nabla\Phi|^2)- {\rm Re}\, z)|u|^2\,dx \\ & \geq \int_{W} e^{2\mu^{1/2}\Phi}(\mu E- {\rm Re}\, z)|u|^2\,dx 
\geq \alpha \mu\|u\|^2_{\mu^{1/2}\Phi},
\end{aligned}
\end{multline*}
that immediately completes the proof.
\end{proof}

\begin{cor}\label{c:h1}
Under the assumptions of Proposition~\ref{p:1}, we have
\[
\|\nabla_{\bf A}(e^{\mu^{1/2}\Phi}(H_W(\mu)-z)^{-1}v)\|^2 \leq
\frac{C}{\mu}\|v\|^2_{\mu^{1/2}\Phi},\quad v\in
L^2_{\mu^{1/2}\Phi}(W).
\]
\end{cor}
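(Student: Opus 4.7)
The plan is to apply the energy identity of Lemma~\ref{l:1.1} directly to $u=(H_W(\mu)-z)^{-1}v$, which makes $(H(\mu)-z)u = v$. The identity then reads
\[
\|\nabla_{\bf A}(e^{\mu^{1/2}\Phi}u)\|^2 + \int_{W} e^{2\mu^{1/2}\Phi}\bigl(\mu(V-|\nabla\Phi|^2) - {\rm Re}\, z\bigr)|u|^2\,dx = {\rm Re}\int_W e^{2\mu^{1/2}\Phi} v\cdot\bar u\,dx.
\]

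Next I would use the two assumptions on $\Phi$ and $z$. Since $\Phi\in\cW_E(\overline W)$ we have $V-|\nabla\Phi|^2\geq E$ almost everywhere, and since ${\rm Re}\,z<\mu(E-\alpha)$ this yields $\mu(V-|\nabla\Phi|^2)-{\rm Re}\,z\geq \alpha\mu>0$, so the second term on the left is nonnegative and can be dropped. Bounding the right side by Cauchy--Schwarz gives
\[
\|\nabla_{\bf A}(e^{\mu^{1/2}\Phi}u)\|^2 \leq \|v\|_{\mu^{1/2}\Phi}\,\|u\|_{\mu^{1/2}\Phi}.
\]

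Finally, I would invoke Proposition~\ref{p:1} to control $\|u\|_{\mu^{1/2}\Phi}\leq (\alpha\mu)^{-1}\|v\|_{\mu^{1/2}\Phi}$, which produces the desired estimate with $C=1/\alpha$.

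I do not foresee any real obstacle: the result is essentially a byproduct of the same energy inequality that underlies Proposition~\ref{p:1}, the only twist being to keep the gradient term on the left instead of discarding it. The one small point to watch is the regularity/density issue — the identity of Lemma~\ref{l:1.1} is formulated for $u\in\Dom(H_W(\mu))$, so one should note that $(H_W(\mu)-z)^{-1}v$ lies in this domain for $v\in L^2(W)$, and extend to $v\in L^2_{\mu^{1/2}\Phi}(W)$ by the boundedness of the resolvent established in Proposition~\ref{p:1} (or by a density argument in the weighted space).
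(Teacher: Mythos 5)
Your proof is correct and follows essentially the same route as the paper: apply the identity of Lemma~\ref{l:1.1} to $u=(H_W(\mu)-z)^{-1}v$ and conclude via the weighted resolvent bound of Proposition~\ref{p:1}. Your treatment is in fact slightly cleaner, since you drop the potential term by its positivity ($\mu(V-|\nabla\Phi|^2)-\mathrm{Re}\,z\geq\alpha\mu>0$) rather than estimating its absolute value as the paper does, and both handlings of the remaining term (Cauchy--Schwarz versus Young's inequality) lead to the same $C/\mu$ bound.
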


\begin{proof}
By~(\ref{e:energy}), for any $\mu$ large enough, one has
\begin{multline*}
 \int_W |\nabla_{\bf A}(e^{\mu^{1/2}\Phi}(H_W(\mu)-z)^{-1}v)|^2\,dx={\rm Re}\, \int_{W} e^{2\mu^{1/2}\Phi} v\cdot \overline{(H_W(\mu)-z)^{-1}v}\,dx \\ -\int_{W} e^{2\mu^{1/2}\Phi}(\mu (V-|\nabla\Phi|^2)- {\rm Re}\, z)|(H_W(\mu)-z)^{-1}v|^2\,dx.
\end{multline*}

We know that ${\rm Re}\,z <  \mu (E-\alpha)$, $V$ and $|\nabla\Phi|$ are uniformly bounded.  For the last term, we have
\begin{multline*}
-\int_{W} e^{2\mu^{1/2}\Phi}(\mu(V-|\nabla\Phi|^2)- {\rm Re}\, z)|(H_W(\mu)-z)^{-1}v|^2\,dx \\ 
\begin{aligned} 
= & \int_{W} e^{2\mu^{1/2}\Phi}({\rm Re}\, z-\mu V)|(H_W(\mu)-z)^{-1}v|^2\,dx\\ & + \mu\int_{W} e^{2\mu^{1/2}\Phi} |\nabla\Phi|^2|(H_W(\mu)-z)^{-1}v|^2\,dx\\
& \leq C\mu\|(H_W(\mu)-z)^{-1}v \|^2_{\mu^{1/2}\Phi}\leq \frac{C_1}{\mu}\|v \|^2_{\mu^{1/2}\Phi}.
\end{aligned}
\end{multline*}

For the first term, we have
\begin{multline*}
{\rm Re}\,( e^{2\mu^{1/2}\Phi} v, (H_W(\mu)-z)^{-1}v) \\ \leq
\frac{1}{2}\left(\mu^{-1}\|v\|^2_{\mu^{1/2}\Phi} +
\mu\|(H_W(\mu)-z)^{-1}v\|_{\mu^{1/2}\Phi}^2\right) 
\leq
\frac{C_2}{\mu} \|v\|^2_{\mu^{1/2}\Phi},
\end{multline*}
that completes the proof.
\end{proof}

\subsection{Estimates near the wells}
In this section, we assume that Assumption \ref{a1} is satisfied with the fixed $E_0$ and $E_1$ satisfies \eqref{e:epsilon1}. Fix some $E_2$ and $E_3$ such that $E_1<E_2<E_3<E_0$ and consider a weight function $\Phi_h\in \cW_{E_1}(U_h)$ given by $\Phi_h(x)=d_{E_2}(x,U_{E_3,h})$, where $U_{E_3,h}$ is defined by \eqref{e:UEh}. 

\begin{prop}\label{p:10}
Assume that $K(\mu)$ is a bounded subset in $\CC$ such that
$K(\mu)\subset \{z\in \CC : {\rm Re}\,z <   
 E_1\mu \}$ and, for any $\epsilon >0$\,, there exists $C_\epsilon>0$ such that 
\[
{\rm dist}\,(K(\mu), \sigma(H_{U_h}(\mu)))\geq
C_\epsilon e^{-\epsilon \mu^{1/2}}, \quad h\in D\,,
\]
for all sufficiently large $\mu$.
 Then for any $z\in K(\mu)$ the operator
$(H_{U_h}(\mu)-z)^{-1}$ defines a bounded operator in
$L^2_{\mu^{1/2}\Phi_h}({U_h})$ and, for any $\epsilon>0$, there exists $C_{1,\epsilon}>0$ such that 
\[
\|(H_{U_h}(\mu)-z)^{-1}\|_{\mu^{1/2}\Phi_h}\leq
C_{1,\epsilon}e^{\epsilon \mu^{1/2}}, \quad h\in D\,,
\]
for all sufficiently large $\mu$.
\end{prop}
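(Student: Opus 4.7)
The plan is to mimic the argument of Proposition~\ref{p:1} by applying the energy identity of Lemma~\ref{l:1.1} to $u = (H_{U_h}(\mu) - z)^{-1} v$, but with the essential new difficulty that, on $U_h$, the effective potential $V - |\nabla \Phi_h|^2$ is not bounded below by $E_1$: inside the well it can be as small as $\min V$, which may be close to zero. I will resolve this by splitting $U_h$ into a ``good'' region where the effective potential is coercive above $E_1$ and a ``bad'' region, contained in $U_{E_3, h}$, on which the weight $\Phi_h$ in fact vanishes.

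Two preparatory observations drive the argument. First, by Assumption~\ref{a2} the diameter of $U_h$ is uniformly bounded and $V$ is uniformly bounded, so the Agmon distance $d_{E_2}(\cdot, U_{E_3,h})$ is bounded and hence $0 \leq \Phi_h \leq C$ uniformly in $h$. Second, the standard inequality $|\nabla \Phi_h|^2 \leq (V - E_2)_+$ implies that, for any $\delta \in (0, E_2 - E_1)$, the set $\Omega_- := \{x \in U_h : V(x) - |\nabla \Phi_h(x)|^2 < E_1 + \delta\}$ is contained in $\{V < E_1 + \delta\} \subset U_{E_2, h} \subset U_{E_3, h}$, so $\Phi_h \equiv 0$ on $\Omega_-$. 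Put $\Omega_+ = U_h \setminus \Omega_-$.

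Set $\tilde u = e^{\mu^{1/2} \Phi_h} u$ and $\tilde v = e^{\mu^{1/2} \Phi_h} v$. Applying identity \eqref{e:energy}, discarding the nonnegative term $\|\nabla_{\bf A}\tilde u\|^2$, and splitting the remaining integral over $\Omega_\pm$, I get a coercive contribution $\geq \mu\delta \|\tilde u\|_{\Omega_+}^2$ from $\Omega_+$ (since there $\mu(V - |\nabla\Phi_h|^2) - {\rm Re}\,z \geq \mu\delta$) and, since $\Phi_h = 0$ and $\tilde u = u$ on $\Omega_-$, a contribution bounded below by $-E_1\mu \|u\|_{\Omega_-}^2$ from $\Omega_-$. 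This yields $\mu \delta \|\tilde u\|_{\Omega_+}^2 \leq E_1\mu \|u\|_{\Omega_-}^2 + \|\tilde v\| \|\tilde u\|$. Now the distance hypothesis enters decisively: it gives $\|(H_{U_h}(\mu) - z)^{-1}\| \leq C_\epsilon^{-1} e^{\epsilon\mu^{1/2}}$ as an operator on $L^2(U_h)$, so $\|u\|_{\Omega_-} \leq \|u\| \leq C_\epsilon^{-1} e^{\epsilon\mu^{1/2}} \|v\| \leq C_\epsilon^{-1} e^{\epsilon\mu^{1/2}} \|\tilde v\|$, where the last step uses $\Phi_h \geq 0$. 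Combining with the identity $\|\tilde u\|^2 = \|\tilde u\|_{\Omega_+}^2 + \|u\|_{\Omega_-}^2$ and absorbing the Cauchy--Schwarz term $\|\tilde v\|\|\tilde u\|$ via AM-GM into $(\mu\delta/2)\|\tilde u\|^2$ produces, for $\mu$ large, $\|\tilde u\| \leq C_{1,\epsilon}\, e^{\epsilon\mu^{1/2}} \|\tilde v\|$, which is the required bound after relabelling $\epsilon$.

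The main obstacle is conceptual rather than computational: because the well carries genuine low-lying spectrum, one cannot force $V - |\nabla \Phi_h|^2 > E_1$ everywhere, and the single-step coercivity proof of Proposition~\ref{p:1} is unavailable. The key is that $\Phi_h$ has been deliberately constructed to vanish on exactly the region where coercivity fails, so the bad contribution collapses to an unweighted estimate, which is precisely what the spectral-gap hypothesis delivers. A minor technical point is that the energy identity requires $u$ and $e^{\mu^{1/2}\Phi_h} u$ to have enough regularity; this is easily handled by taking $v \in C^\infty_c(U_h)$ first (so $u \in \Dom(H_{U_h}(\mu))$ and all integrals are finite, $\Phi_h$ being bounded) and then extending by density once the operator bound is established.
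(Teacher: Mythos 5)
Your proof is correct, but it takes a genuinely different and shorter route than the paper. The paper follows the Helffer--Sj\"ostrand template: an IMS partition of unity $\chi_{1,\eta}^2+(\chi_{1,\eta}')^2=1$ at the $\mu$-dependent scale $\eta=\alpha\mu^{-1/2}$, the localization formula, coercivity on $\supp\chi_{1,\eta}'$ (where $V>E_2$), and then a three-term resolvent decomposition --- gluing $(H_{M_0}(\mu)-z)^{-1}$ on the exterior region $M_0=\{\Phi_h\geq 2\eta\}$ to $(H_{U_h}(\mu)-z)^{-1}$ near the well via commutators $[H_{U_h}(\mu),\chi_{2,\eta}]$, and invoking Proposition~\ref{p:1} and Corollary~\ref{c:h1} --- in order to control the well term $\|e^{\mu^{1/2}\Phi_h}\chi_{1,\eta}u\|$. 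You bypass all of this by splitting the energy integral over $\Omega_+=\{V-|\nabla\Phi_h|^2\geq E_1+\delta\}$ and its complement, and observing (via the eikonal inequality $|\nabla\Phi_h|^2\leq [V-E_2]_+$, which the paper also uses) that $\Omega_-\subset\{V<E_1+\delta\}\subset U_{E_3,h}$, where $\Phi_h\equiv 0$; so the non-coercive contribution is an \emph{unweighted} $L^2$ quantity, controlled by $\|(H_{U_h}(\mu)-z)^{-1}\|_{L^2\to L^2}\leq C_\epsilon^{-1}e^{\epsilon\mu^{1/2}}$, which the distance hypothesis and self-adjointness give directly. This shortcut is available precisely because the operator being resolved here is the one-well Dirichlet operator itself, whose plain resolvent norm is a hypothesis; the paper's gluing machinery is the robust version needed when only reference operators are controlled (as in Proposition~\ref{p:D}), and it is arguably redundant at this particular step. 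Your constants are uniform in $h$ since $\delta$, $E_1,E_2,E_3$ and $C_\epsilon$ are, and the qualitative claim that the resolvent is bounded on $L^2_{\mu^{1/2}\Phi_h}(U_h)$ follows from the boundedness of $\Phi_h$ on $\overline{U_h}$, as you note. Two harmless technicalities: $\Omega_\pm$ are defined only up to null sets ($|\nabla\Phi_h|$ exists a.e.), which does not affect the integrals; and the density step you mention is unnecessary, since Lemma~\ref{l:1.1} applies to every $u\in\Dom(H_{U_h}(\mu))$ with $\Phi_h$ bounded Lipschitz.
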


\begin{proof}
For every sufficiently small $\eta>0$, we take $C^\infty_b$-bounded families $\{\chi_{1,\eta}\in
C^\infty_c(U_h) : h\in D\}$ and $\{\chi^\prime_{1,\eta}\in C^\infty(U_h) : h\in D\}$ such that: 

(a) $0\leq \chi_{1,\eta}\leq 1$ for any $x\in U_h$, $\chi_{1,\eta}\equiv 1$ in a neighborhood
of $\{x\in {U_h}:{\Phi_h}(x)\leq 2\eta \}$, and ${\Phi_h}\leq 3\eta$ on $\supp
\chi_{1,\eta}$. 

(b) $\chi^\prime_{1,\eta}\geq 0$ and $(\chi_{1,\eta})^2 +
(\chi^\prime_{1,\eta})^2\equiv 1$;

(c) there exists a constant $C>0$ such that:
\begin{equation}\label{e:eta-nabla}
\eta (|\nabla \chi_{1,\eta}|+|\nabla \chi^\prime_{1,\eta}|)\leq C, \quad h\in D\,.
\end{equation}

Let us show the existence of such families. Since $V\in C^\infty_b(M)$, it is uniformly Lipschitz: there exists $L>0$ such that  
\begin{equation}\label{e:LipV}
|V(x)-V(y)|\leq Ld(x,y), \quad x,y\in M,
\end{equation}
where $d$ stands for the geodesic distance on $M$.  

For any sufficiently small $\eta>0$, take any functions $F_\eta, G_\eta\in C^\infty(\RR)$ such that: 

(1) $F_\eta(u)=1$ for $u\leq E_3+\frac{2L\eta}{E_3-E_2}$, $F_\eta(u)=0$ for $u\geq E_3+\frac{3L\eta}{E_3-E_2}$, and $0<F_\eta(u)<1$ otherwise;

(2) $G_{\eta}(u)\geq 0$ and $(F_{\eta}(u))^2 +
(G_{\eta}(u))^2=1$  for any $u\in \RR$;

(3) for any $u\in\RR$, we have $F_\eta^\prime(u)\leq c/\eta$ and $G_\eta^\prime(u)\leq c/\eta,$ where $c>0$ is independent of $\eta$. 

For any $\eta>0$ such that $E_3+\frac{3L\eta}{E_3-E_2}<E_0$, put 
\[
\chi_{1,\eta}(x)=F_\eta(V(x)), \quad \chi^\prime_{1,\eta}(x)=G_\eta(V(x)), \quad x\in U_h.
\]

Take an arbitrary $x\in {U_h}$ such that ${\Phi_h}(x)\leq 2\eta$ and show that $\chi_{1,\eta}(x)=1.$

If $x\in U_{E_3,h}$, then $V(x)\leq E_3$ and, therefore, $\chi_{1,\eta}(x)=1.$

It is clear that, for any $x\not \in U_{E_3,h}$, we have $\Phi_h(x)=d_{E_2}(x,\partial U_{E_3,h})$. Moreover, since $\partial U_{E_3,h}$ is compact, there is $y\in \partial U_{E_3,h}$ such that 
\begin{equation}\label{e:Phi1}
d_{E_2}(x,y)=\Phi_h(x)\leq 2\eta.
\end{equation}

On $U_h\setminus  U_{E_3,h}$, we have 
\[
(E_3-E_2)  g\leq g_{E_2}\leq  (E_0-E_2)  g.
\]
It follows that, for any $x_1,x_2\in U_h\setminus  U_{E_3,h}$, we have
\begin{equation}\label{e:ggM}
d_{E_2}(x_1,x_2) \geq (E_3-E_2) d(x_1,x_2),
\end{equation}
where $d$ is the distance defined by $g$. 

By \eqref{e:Phi1} and \eqref{e:ggM}, we have
\[
d(x,y) \leq \frac{2\eta}{E_3-E_2}.  
\]
It is clear that $V(y)=E_3$. Therefore, by \eqref{e:LipV}, 
\[
V(x)-V(y)=V(x)-E_3\leq\frac{2L\eta}{E_3-E_2}.  
\]
Hence $V(x)\leq E_3+\frac{2L\eta}{E_3-E_2}$ and $\chi_{1,\eta}(x)=1$. 

Now take any $x\in {U_h}$ such that $\chi_{1,\eta}(x)=0$. Then $V(x)>E_3+\frac{3L\eta}{E_3-E_2}$ and by \eqref{e:LipV}, for any $y\in \partial U_{E_3,h}$,
\[
d(x,y)\geq \frac{1}{L}|V(x)-V(y)|=\frac{1}{L}(V(x)-E_3)>\frac{3\eta}{E_3-E_2}.  
\]
By \eqref{e:ggM}, we infer that $d_{E_2}(x,y)>3\eta$ and therefore
$${\Phi_h}(x)=d_{E_2}(x,\partial U_{E_3,h})>3\eta.$$

Finally, we have
\[
\nabla \chi_{1,\eta}(x)=F^\prime_\eta(V(x))\nabla V(x), \quad x\in U_h,
\]
that immediately implies \eqref{e:eta-nabla} and completes the proof of the existence of the functions $\chi_{1,\eta}$ and $\chi^\prime_{1,\eta}$.

Now we use the standard localization formula
\begin{multline*}
\|\nabla_{\bf A} (e^{{\mu^{1/2}{\Phi_h}}}u)\|^2= \|\nabla_{\bf A} (\chi_{1,\eta} e^{{\mu^{1/2}{\Phi_h}}}u)\|^2
+\|\nabla_{\bf A}(\chi^\prime_{1,\eta} e^{{\mu^{1/2}{\Phi_h}}}u)\|^2\\ -\| |\nabla
\chi_{1,\eta}| e^{{\mu^{1/2}{\Phi_h}}} u\|^2-\| |\nabla
\chi^\prime_{1,\eta}| e^{{\mu^{1/2}{\Phi_h}}} u\|^2\,.
\end{multline*}

By (\ref{e:energy}), it follows that
\begin{multline}\label{e:energy1}
  \int_{U_h} |\nabla_{\bf A}(\chi^\prime_{1,\eta} e^{\mu^{1/2}{\Phi_h}}u)|^2\,dx \\ +\mu \int_{{U_h}} e^{2\mu^{1/2}{\Phi_h}}(V-|\nabla{\Phi_h}|^2)|\chi^\prime_{1,\eta}u|^2\,dx - {\rm Re}\, z\int_{{U_h}} e^{2\mu^{1/2}{\Phi_h}}
|\chi^\prime_{1,\eta} u|^2\,dx\\ -  \| |\nabla
\chi_{1,\eta}| e^{{\mu^{1/2}{\Phi_h}}}\chi^\prime_{1,\eta} u\|^2-  \| |\nabla
\chi^\prime_{1,\eta}| e^{{\mu^{1/2}{\Phi_h}}}\chi^\prime_{1,\eta} u\|^2 \\={\rm Re}\, \int_{{U_h}} e^{2\mu^{1/2}{\Phi_h}} (H(\mu)-z)u \bar u\,dx-  \int_{U_h} |\nabla_{\bf A}(\chi_{1,\eta} e^{\mu^{1/2}{\Phi_h}}u)|^2\,dx\\
-\mu   \int_{{U_h}} e^{2\mu^{1/2}{\Phi_h}}(V-|\nabla{\Phi_h}|^2)|\chi_{1,\eta}u|^2\,dx + {\rm Re}\, z\int_{{U_h}} e^{2\mu^{1/2}{\Phi_h}}|\chi_{1,\eta} u|^2\,dx\\
+  \| |\nabla \chi_{1,\eta}| e^{{\mu^{1/2}{\Phi_h}}} \chi_{1,\eta} u\|^2+  \| |\nabla
\chi^\prime_{1,\eta}| e^{{\mu^{1/2}{\Phi_h}}}\chi_{1,\eta}  u\|^2.
\end{multline}

Put $\eta=\alpha \mu^{-1/2}$ with sufficiently large $\alpha>0$, which will be chosen later.
Taking into account \eqref{e:eta-nabla}:
\[
|\nabla \chi_{1,\eta}|+|\nabla \chi^\prime_{1,\eta}|\leq
\frac{C}{\eta}=\frac{C}{\alpha} \mu^{1/2},
\]
we get the following estimate for
the right-hand side of (\ref{e:energy1})
\begin{multline*}
-\mu  \int_{{U_h}} e^{2\mu^{1/2}{\Phi_h}}(V-|\nabla{\Phi_h}|^2)|\chi_{1,\eta}u|^2\,dx + {\rm Re}\, z\int_{{U_h}} e^{2\mu^{1/2}{\Phi_h}}|\chi_{1,\eta} u|^2\,dx\\
+  \| |\nabla \chi_{1,\eta}| e^{{\mu^{1/2}{\Phi_h}}} \chi_{1,\eta} u\|^2+  \| |\nabla
\chi^\prime_{1,\eta}| e^{{\mu^{1/2}{\Phi_h}}}\chi_{1,\eta}  u\|^2
\\
 \leq C\mu  \|e^{{\mu^{1/2}{\Phi_h}}} \chi_{1,\eta}u\|^2\,.
\end{multline*}
On the other hand we have the following estimate for the left-hand side of (\ref{e:energy1}):
\begin{multline*}
 \int_{U_h} |\nabla_{\bf A}(\chi^\prime_{1,\eta} e^{\mu^{1/2}{\Phi_h}}u)|^2\,dx \\ +\mu  \int_{{U_h}} e^{2\mu^{1/2}{\Phi_h}}(V-|\nabla{\Phi_h}|^2)|\chi^\prime_{1,\eta}u|^2\,dx - {\rm Re}\, z\int_{{U_h}} e^{2\mu^{1/2}{\Phi_h}}
|\chi^\prime_{1,\eta} u|^2\,dx\\ -  \| |\nabla
\chi_{1,\eta}| e^{{\mu^{1/2}{\Phi_h}}}\chi^\prime_{1,\eta} u\|^2-  \| |\nabla
\chi^\prime_{1,\eta}| e^{{\mu^{1/2}{\Phi_h}}}\chi^\prime_{1,\eta} u\|^2 \\
\geq \int_{{U_h}} e^{{2\mu^{1/2}{\Phi_h}}}\, \left[\mu  (V-|\nabla{\Phi_h}|^2)-{\rm Re}\, z-\frac{C^2}{\alpha^2}\mu\right]\, |\chi^\prime_{1,\eta}
u(x)|^2\,dx\\
  \geq C\mu \|e^{{\mu^{1/2}{\Phi_h}}}
\chi^\prime_{1,\eta}u\|^2\,.
\end{multline*}
Here we use the fact that $|\nabla{\Phi_h}|^2<V-E_1$ and choose $\alpha$ to be large enough. 

Thus, from (\ref{e:energy1}), we get the estimate
\[
c\mu \|e^{{\mu^{1/2}{\Phi_h}}} u\|^2 \leq {\rm Re}\, \int_{{U_h}} e^{2\mu^{1/2}{\Phi_h}} (H(\mu)-z)u \bar u\,dx + C  \mu 
\|e^{{\mu^{1/2}{\Phi_h}}} \chi_{1,\eta}u\|^2\,.
\]
It remains to show that, for any $\epsilon>0$, there exists $C_\epsilon>0$ such that 
\begin{equation*}
\|e^{{\mu^{1/2}{\Phi_h}}}\chi_{1,\eta}u\| \leq C_\epsilon
e^{{\epsilon\mu^{1/2}}} \|e^{{\mu^{1/2}{\Phi_h}}}(H_{U_h}(\mu)-z)u\|\,,
\end{equation*}
or equivalently,
\begin{equation}\label{e:chi}
\|\chi_{1,\eta}(H_{U_h}(\mu)-z)^{-1}u\|_{\mu^{1/2}{\Phi_h}} \leq C_\epsilon
e^{{\epsilon\mu^{1/2}}} \|u\|_{\mu^{1/2}{\Phi_h}}, \quad u\in
L^2_{\mu^{1/2}{\Phi_h}}({U_h})\,,
\end{equation}
for any sufficiently large $\mu$.

For this, we choose a function $\chi_{2,\eta}\in C^\infty_c({U_h})$
such that $\chi_{2,\eta}\equiv 1$ in a neighborhood of $\{x\in
{U_h}:{\Phi_h}(x)\leq \eta\}$, ${\Phi_h}\leq 2\eta$ on $\supp \chi_{2,\eta}$.
In particular, $\chi_{1,\eta}\equiv 1$ on $\supp \chi_{2,\eta}$.
We can assume that there exists a constant $C$ such that for all
sufficiently small $\eta>0$
\begin{equation}\label{e:chi2}
\eta |\nabla \chi_{2,\eta}|+\eta^2|\Delta \chi_{2,\eta}|\leq C.
\end{equation}

Let $M_0=\{x\in {U_h}: {\Phi_h}(x)\geq 2\eta\}$. Then we have
\begin{multline*}
(H_{U_h}(\mu)-z)^{-1}u=(1-\chi_{2,\eta})(H_{M_0}(\mu)-z)^{-1}(1-\chi_{1,\eta})u+
(H_{U_h}(\mu)-z)^{-1}\chi_{1,\eta}u \\ + (H_{U_h}(\mu)-z)^{-1}\chi_{1,\eta}
[H_{U_h}(\mu),\chi_{2,\eta}] (H_{M_0}(\mu)-z)^{-1}(1-\chi_{1,\eta})u\,.
\end{multline*}
We consider three terms in the right hand side of the last
identity separately. For the first one we use
Proposition~\ref{p:1} and obtain
\begin{equation}\label{e:1}
\|\chi_{1,\eta}(1-\chi_{2,\eta})(H_{M_0}(\mu)-z)^{-1}
(1-\chi_{1,\eta})u\|_{\mu^{1/2}{\Phi_h}}\leq C\mu
\|u\|_{\mu^{1/2}{\Phi_h}}\,.
\end{equation}
For the second term, since ${\Phi_h}\leq 3\eta$ on $\supp
\chi_{1,\eta}$, we have
\[
\|\chi_{1,\eta}(H_{U_h}(\mu)-z)^{-1}\chi_{1,\eta}u\|_{\mu^{1/2}{\Phi_h}}
\leq e^{3\alpha} \|(H_{U_h}(\mu)-z)^{-1}\chi_{1,\eta} u\|\,.
\]
By the assumptions and the fact that ${\Phi_h}\geq 0$, it follows that
\[
\|(H_{U_h}(\mu)-z)^{-1}\chi_{1,\eta} u\|\leq C e^{\epsilon\mu^{1/2}}
\|\chi_{1,\eta} u\| \leq C_1 e^{{\epsilon\mu^{1/2}}} \|
u\|_{\mu^{1/2}{\Phi_h}}, \quad {\mu\gg 1}\,.
\]
So we get for the second term
\begin{equation}\label{e:2}
\|\chi_{1,\eta}(H_{U_h}(\mu)-z)^{-1}\chi_{1,\eta}u\|_{\mu^{1/2}{\Phi_h}}
\leq C_2 e^{{\epsilon\mu^{1/2}}} \| u\|_{{\Phi_h}/{\mu}}, \quad {\mu\gg 1}\,.
\end{equation}

For the third term we put $w=(H_{M_0}(\mu)-z)^{-1} (1-
\chi_{1,\eta})u$. By (\ref{e:2}), it follows that
\begin{multline*}
\|\chi_{1,\eta}(H_{U_h}(\mu)-z)^{-1}\chi_{1,\eta} [H_{U_h}(\mu),
\chi_{2,\eta}] w\|_{\mu^{1/2}{\Phi_h}}\\ \leq C_1
e^{{\epsilon\mu^{1/2}}}\|[H_{U_h}(\mu), \chi_{2,\eta}]
w\|_{\mu^{1/2}{\Phi_h}}, \quad {\mu\gg 1}\,.
\end{multline*}
Now we have
\[
[H_{U_h}(\mu), \chi_{2,\eta}]w=2 \, d \chi_{2,\eta}\cdot \nabla_{\bf A} w +  \Delta \chi_{2,\eta} w\,.
\]
Therefore, taking into account (\ref{e:chi2}), we get
\begin{align*}
\|[H_{U_h}(\mu), \chi_{2,\eta}] w\|^2_{\mu^{1/2}{\Phi_h}} & \leq C
(\mu^2\|\nabla_{\bf A} w\|^2_{\mu^{1/2}{\Phi_h}}+ \mu^4 \|w\|^2_{\mu^{1/2}{\Phi_h}})\\ & \leq C(\mu^2\|\nabla_{\bf A}(e^{{\mu^{1/2}{\Phi_h}}}w)\|^2+ \mu^4   \|w\|^2_{\mu^{1/2}{\Phi_h}})\,.
\end{align*}
By Proposition \ref{p:1} and Corollary~\ref{c:h1}, we have
\begin{align*}
\|[H_{U_h}(\mu), \chi_{2,\eta}] w\|^2_{\mu^{1/2}{\Phi_h}}  \leq & C(\mu^2
\|\nabla_{\bf A}(e^{{\mu^{1/2}{\Phi_h}}}(H_{M_0}(\mu)-z)^{-1} (1- \chi_{1,\eta})u)\|^2 \\ &
+  \mu^4 \|(H_{M_0}(\mu)-z)^{-1} (1-
\chi_{1,\eta})u\|^2_{\mu^{1/2}{\Phi_h}}) \\ \leq & C \mu^2\|(1-
\chi_{1,\eta})u\|^2_{\mu^{1/2}{\Phi_h}} \leq C \mu^2
\|u\|^2_{\mu^{1/2}{\Phi_h}}\,.
\end{align*}
So we get for the third term
\begin{multline}\label{e:3}
\|\chi_{1,\eta}(H_{U_h}(\mu)-z)^{-1}\chi_{1,\eta} [H_{U_h}(\mu),
\chi_{2,\eta}] (H_{M_0}(\mu)-z)^{-1} (1-
\chi_{1,\eta})u\|_{\mu^{1/2}{\Phi_h}}\\ \leq C_{3,\epsilon}
e^{{\epsilon\mu^{1/2}}} \|u\|_{\mu^{1/2}{\Phi_h}}, \quad {\mu\gg 1}\,.
\end{multline}
Now (\ref{e:chi}) follows by adding the estimates (\ref{e:1}),
(\ref{e:2}) and (\ref{e:3}).
\end{proof}

\begin{cor}\label{c:h2}
Under the assumptions of Proposition~\ref{p:10}, for any
$\epsilon>0$\,, there exists $C_{2,\epsilon}>0$ such that  
\begin{multline*}
\|\nabla_{\bf A}(e^{{\mu^{1/2}{\Phi_h}}}(H_{U_h}(\mu)-z)^{-1}v)\| \leq
C_{2,\epsilon}e^{\epsilon\mu^{1/2}}
\|v\|_{\mu^{1/2}{\Phi_h}},\\ v\in L^2_{\mu^{1/2}{\Phi_h}}({U_h}), \quad h\in D,
\end{multline*}
for any sufficiently large $\mu$.
\end{cor}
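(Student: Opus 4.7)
The plan is to imitate the proof of Corollary~\ref{c:h1} verbatim, substituting Proposition~\ref{p:10} for Proposition~\ref{p:1}; the only new wrinkle is that the resulting bound will carry an extra polynomial factor in $\mu$, which I expect to absorb into the exponential by giving up an arbitrarily small constant in the exponent.

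Concretely, I would set $u = (H_{U_h}(\mu)-z)^{-1}v$ and apply the energy identity \eqref{e:energy} of Lemma~\ref{l:1.1} exactly as in the proof of Corollary~\ref{c:h1} to obtain
\begin{multline*}
\int_{U_h} |\nabla_{\bf A}(e^{\mu^{1/2}\Phi_h} u)|^2\,dx = {\rm Re}\, \int_{U_h} e^{2\mu^{1/2}\Phi_h} v\,\bar u\,dx \\
- \int_{U_h} e^{2\mu^{1/2}\Phi_h}\bigl(\mu(V-|\nabla\Phi_h|^2) - {\rm Re}\,z\bigr)|u|^2\,dx.
\end{multline*}
Since $V$ and $|\nabla\Phi_h|^2$ are uniformly bounded in $h$ (indeed $|\nabla\Phi_h|^2 < V - E_1$ by the choice $\Phi_h\in\cW_{E_1}(U_h)$) and ${\rm Re}\,z < E_1\mu$, the coefficient $|\mu(V-|\nabla\Phi_h|^2) - {\rm Re}\,z|$ is $O(\mu)$, so the second integral is dominated by $C\mu\|u\|^2_{\mu^{1/2}\Phi_h}$. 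For the first term I would apply Cauchy--Schwarz in the weighted $L^2$ inner product (rather than the AM--GM trick used in Corollary~\ref{c:h1}), obtaining $\|v\|_{\mu^{1/2}\Phi_h}\|u\|_{\mu^{1/2}\Phi_h}$. Proposition~\ref{p:10}, applied with $\epsilon$ replaced by $\epsilon/4$, then bounds $\|u\|_{\mu^{1/2}\Phi_h}$ by $C_{1,\epsilon/4}\,e^{(\epsilon/4)\mu^{1/2}}\|v\|_{\mu^{1/2}\Phi_h}$, uniformly in $h\in D$.

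Combining the two estimates should yield
\[
\|\nabla_{\bf A}(e^{\mu^{1/2}\Phi_h} u)\|^2 \leq C\mu\,e^{(\epsilon/2)\mu^{1/2}}\|v\|^2_{\mu^{1/2}\Phi_h}.
\]
Since $\mu \leq e^{\delta\mu^{1/2}}$ holds for any fixed $\delta>0$ once $\mu$ is large, the prefactor $C\mu$ can be absorbed into a slightly worse exponential; taking a square root and relabeling $\epsilon$ then delivers the desired bound with some $C_{2,\epsilon}>0$ independent of $h$. I do not anticipate any serious obstacle: uniformity in $h$ is inherited directly from the uniformity already built into Proposition~\ref{p:10}, and the overall argument is a mechanical adaptation of Corollary~\ref{c:h1}, with the losses in the weighted resolvent bound propagating in the obvious way through the energy inequality.
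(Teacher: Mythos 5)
Your proposal is correct and follows essentially the same route as the paper: the paper's own proof consists precisely of applying the energy identity of Lemma~\ref{l:1.1} to $u=(H_{U_h}(\mu)-z)^{-1}v$ and leaving implicit exactly the two estimates you spell out (Cauchy--Schwarz plus Proposition~\ref{p:10} for the cross term, the pointwise bound $\mu(V-|\nabla\Phi_h|^2)-\mathrm{Re}\,z \geq -C\mu$ for the potential term, and absorption of the factor $\mu$ into $e^{\epsilon\mu^{1/2}}$). The only cosmetic caveat is that you should use the one-sided bound $\mathrm{Re}\,z<E_1\mu$, $V\geq 0$ rather than an absolute-value bound on the coefficient, since no lower bound on $\mathrm{Re}\,z$ is assumed; this does not affect the argument.
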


\begin{proof}
By Lemma \ref{l:1.1}, for any $\mu>0$, $z\in\CC$ and
$v\in L^2_{\mu^{1/2}{\Phi_h}}({U_h})$ one has
\begin{multline*}
\|\nabla_{\bf A}(e^{{\mu^{1/2}{\Phi_h}}}(H_{U_h}(\mu)-z)^{-1}v)\|^2  = {\rm Re}\, \int_{{U_h}} e^{2\mu^{1/2}\Phi_h} v \cdot \overline{(H_{U_h}(\mu)-z)^{-1}v}\,dx\\ - \int_{U_h} e^{2\mu^{1/2}\Phi_h}(\mu(V-|\nabla\Phi_h|^2) - {\rm Re}\, z)|(H_{U_h}(\mu)-z)^{-1}v|^2\,dx.
\end{multline*}
\end{proof}

\subsection{Proof of Theorem~\ref{t:D}}
 
Theorem~\ref{t:D} follows immediately from the following

\begin{prop}\label{p:D}
Let Assumption \ref{a1} hold with the fixed $E_0$ and $E_1\in (0,E_0)$.
Assume that $K(\mu)$ is a bounded subset in $\CC$ such that
$K(\mu)\subset \{z\in \CC : {\rm Re}\,z < E_1\mu \}$, and, for any $\epsilon >0$\,, there exists $C_\epsilon>0$ such that  
\[
{\rm dist}\,(K(\mu), \sigma(H_{U_h}(\mu)))\geq
\frac{1}{C_\epsilon}e^{-\epsilon \mu^{1/2}}, \quad h\in D\,
\]
for any sufficiently large $\mu$.
Then $$K(\mu)\cap \sigma(H(\mu)) = \emptyset, \quad \mu\gg 1\,.
$$

\end{prop}

\begin{proof}
Without loss of generality, we can assume that \eqref{e:epsilon1} holds.
Take any $\eta>0$ such that $E_1+3\eta<E_0$. Let
\[
M_0=M\setminus \bigcup_{h\in D} 
U_{E_1+\eta,h}= \{x\in { M}\,:\,   V(x) \geq
 E_1+\eta \}\,.
\]
Take any function $\chi \in C^\infty(\RR)$ such that $\chi(u)=0$ for $u>E_1+2\eta$, $\chi(u)=1$ for $u<E_1+\eta$ and $0\leq \chi(u)\leq 1$ for any $u\in\RR$. It is easy to see that the function $\phi(x)=\chi(V(x))$ is a $C^\infty_b$-function on $M$, which is of the form
\[ 
\phi=\sum_{h\in D}\phi_h\,,
\]
where $\phi_h\in C^\infty_c({ M})$ such that $\supp
\phi_h \subset U_{E_1+2\eta,h}$, $\phi_h\equiv 1$ on
$U_{E_1+\eta,h}$. Moreover, the family $\{\phi_h : h\in D\}$ is bounded in $C^\infty_b(M)$. Let
\[
\phi_0=1-\sum_{h\in D}\phi_h=(1-\chi)\circ V\,.
\]
Then $\supp \phi_0\subset M_0$. 

Let $\chi_1 \in C^\infty(\RR)$ be such that $\chi_1(u)=0$ for $u>E_1+3\eta$, $\chi_1(u)=1$ for $u<E_1+5/2\eta$ and $0\leq \chi_1(u)\leq 1$ for any $u\in\RR$. It is easy to see that the function $\psi(x)=\chi_1(V(x))$ is a $C^\infty_b$-function on $M$, which is of the form
\[ 
\psi=\sum_{h\in D}\psi_h\,,
\]
where   $\psi_h\in C^\infty_c({ M})$,
$h\in D$, such that $\supp \psi_h \subset
U_{E_1+3\eta,h}$, $\psi_h\equiv 1$ in 
$U_{E_1+5/2\eta,h}$. We can assume that there exists a constant $C$ such that for all
sufficiently small $\eta>0$
\begin{equation}\label{e:psi-gamma}
\eta |\nabla \psi_h|+\eta^2|\Delta \psi_h|\leq C, \quad h\in D.
\end{equation}

Take any function $\chi_2 \in C^\infty(\RR)$ be such that $\chi_2(u)=0$ for $u<E_1+\eta$, $\chi_2(u)=1$ for $u>E_1+2\eta$ and $0\leq \chi(u)\leq 1$ for any $u\in\RR$. Set $\psi_0=\chi_2\circ V$. Then $\psi_0\in C^\infty_b(M)$ is such that $\supp \psi_0 \subset M_0$,
$\psi_0\equiv 1$ in a neighborhood of $M\setminus
\cup_{h\in D} U_{E_1+2\eta,h}$. In particular, we have $\phi_0\psi_0=\phi_0$ and
$\phi_h\psi_h=\phi_h$, for $h\in D$.

For any $\mu>0$ large enough and any $z\in K(\mu)$, define a bounded
operator $R^\mu(z)$ in $L^2( M)$ as
\begin{equation}\label{Rmu}
R^\mu(z)=\sum_{h\in D} \psi_h
(H_{U_h}(\mu)-z)^{-1}\phi_h + \psi_0
(H_{M_0}(\mu)-z)^{-1}\phi_0\,.
\end{equation}
Then
\[
(H(\mu)-z)R^\mu(z)=I+K^\mu(z)\,,
\]
where
\[
K^\mu(z)=\sum_{h\in D} [H(\mu),\psi_h]
(H_{U_h}(\mu)-z)^{-1}\phi_h + [H(\mu),\psi_0]
(H_{M_0}(\mu)-z)^{-1}\phi_0\,.
\]

\begin{lem}\label{l:kh}
There exist $C,c >0$ such that, for any $\mu >0$ large enough and
$z\in K(\mu)$,  the operator $K^\mu(z)$ defines a bounded operator in
$L^2( M)$ with the norm estimate
\[
\|K^\mu(z)\|\leq C e^{-c\mu^{1/2}}\,.
\]
\end{lem}

\begin{proof}
For any $h\in D$, consider a weight function $\Phi_h\in
\cW_{E_1}(U_h)$ given by
$\Phi_h(x)=d_{E_1+\eta}(x,U_{E_1+2\eta,h})$.
By construction, $\Phi_h(x)\equiv 0$ on $\supp \phi_h$. We claim that there exists $c_1>0$ such that, for any $h\in D$, $\Phi_h(x)\geq c_1>0$ on $\supp d\psi_h$. 

Indeed, we know that $\supp d\psi_h \subset
U_{E_1+3\eta,h}\setminus  U_{E_1+5/2\eta,h}$.

It is clear that, for any $x\not \in \supp U_{E_1+2\eta,h}$, 
$$\Phi_h(x)=d_{E_1+\eta}(x,\partial U_{E_1+2\eta,h}).$$

By \eqref{e:LipV}, for any $x\in \supp d\psi_h$ and $y\in \partial U_{E_1+2\eta,h}$, we have 
\[
d(x,y)>\frac{\eta}{2L}. 
\]
By \eqref{e:ggM}, for any $x\in \supp d\psi_h$ and $y\in \partial U_{E_1+2\eta,h}$,
\[
d_{E_1+\eta}(x,y) \geq \eta d(x,y) >\frac{\eta^2}{2L}
\] 
and 
\[
\Phi_h(x)=d_{E_1+\eta}(x,\partial U_{E_1+2\eta,h})> \frac{\eta^2}{2L}=:c_1,
\]
as desired.

For any $w\in {\rm
Dom}\,H(\mu)$, we have
\[
[H(\mu), \psi_h]w=2 \, d \psi_h \cdot \nabla_{\bf A} w +  \Delta
\psi_h\, w\,.
\]
This  implies the estimate
\[
\|[H(\mu), \psi_h]w\|^2_{\mu^{1/2}\Phi_h}   \leq C  (\|\nabla_{\bf A} w\|^2_{\mu^{1/2}\Phi_h}+\|w\|^2_{\mu^{1/2}\Phi_h})\,.
\]
Therefore, for any $u\in L^2( M)$, we obtain
\begin{multline*}
\|[H(\mu), \psi_h] (H_{U_h}(\mu)-z)^{-1}\phi_h u\|^2_{L^2( M)}\\
\begin{aligned}
 = &
\|[H(\mu), \psi_h] (H_{U_h}(\mu)-z)^{-1}\phi_h u\|^2_{L^2(U_h)}\\  \leq &
e^{-c_1\mu^{1/2}} \|[H(\mu), \psi_h] (H_{U_h}(\mu)-z)^{-1}\phi_h
u\|^2_{\mu^{1/2}\Phi_h}\\ \leq & C e^{-c_1\mu^{1/2}}  (
\|\nabla_{\bf A}((H_{U_h}(\mu)-z)^{-1}\phi_h u)\|^2_{\mu^{1/2}\Phi_h} \\ & +  
\|(H_{U_h}(\mu)-z)^{-1}\phi_h u\|^2_{\mu^{1/2}\Phi_h})\,.
\end{aligned}
\end{multline*}
It follows from Proposition \ref{p:10} and Corollary \ref{c:h2} that, for any $\epsilon>0$, there exists $C_\epsilon>0$ such that, for any sufficiently large $\mu$,
\begin{align*}
\|[H(\mu), \psi_h] (H_{U_h}(\mu)-z)^{-1}\phi_h u\|_{L^2( M)} \leq &
C_\epsilon e^{-(c_1-\epsilon)\mu^{1/2}} \|\phi_h
u\|_{\mu^{1/2}\Phi_h}\\ = & C_\epsilon
e^{-(c_1-\epsilon)\mu^{1/2}} \|\phi_h u\|_{L^2(U_h)}\\ \leq &
C_\epsilon e^{-(c_1-\epsilon)\mu^{1/2}} \| u\|_{L^2( M)}\,.
\end{align*}
Here we used the facts that, for any $h\in D$, $\Phi_h(x)\equiv 0$ on $\supp \phi_h$ and $\Phi_h(x)\geq c_1>0$ on $\supp d\psi_h$. 

Similarly, using Proposition \ref{p:1} and Corollary \ref{c:h1}, one can get
\[
\|[H(\mu), \psi_0] (H_{M_0}(\mu)-z)^{-1}\phi_0 u\|_{L^2( M)} \leq C_0
e^{-c_0\mu^{1/2}} \| u\|_{L^2( M)}\,.
\]
Taking into account that the supports of $\phi_h$ with $h\in D$ are disjoint, we get
\begin{align*}
\|K^\mu(z)u\| & \leq C e^{-c\mu^{1/2}} (
\sum_{h\in D} \|\phi_h u\| + \|\phi_0 u\|)\\ &
\leq C_1 e^{-c\mu^{1/2}} \|u\|\,.
\end{align*}
This  completes the proof.
\end{proof}

It follows from Lemma~\ref{l:kh} that, for all sufficiently large
$\mu>0$ and $z\in K(\mu)$, the operator $I+K^\mu(z)$ is invertible in
$L^2(M)$. Then the operator $H(\mu)-z$ is invertible in $L^2( M)$
with
\begin{equation}\label{H-R}
(H(\mu)-z)^{-1}= R^\mu(z)(I+K^\mu(z))^{-1}\,,
\end{equation}
and $K(\mu)\cap \sigma(H(\mu))=\emptyset$ as desired.
\end{proof}

\section{Proof of Theorem \ref{t:equiv}} \label{s:equiv}

The main goal of this section is the proof of Theorem \ref{t:equiv}. First, we will prove two auxiliary results. 

\subsection{Rough estimate for eigenspace dimensions.}
Assume that Assumption \ref{a1} holds with the fixed $E_0$ and $E_1$ satisfies \eqref{e:epsilon1}. For any relative compact open domain $W\subset  M$ with a regular boundary and for any $\lambda\in \RR$ and $\mu>0$, let $E_{H_W(\mu)}(\lambda) =  \chi_{(-\infty, \lambda]}(H_W(\mu))$ denote the spectral projection of the operator $H_W(\mu)$ corresponding to the semi-axis $(-\infty, \lambda]$. The operator $E_{H_U(\mu)}(\lambda)$ is a bounded operator in $L^2(U)$ of the form
\[
E_{H_U(\mu)}(\lambda)=\bigoplus_{h\in D} E_{H_{U_h}(\mu)} (\lambda).
\]

\begin{lem}\label{l:Weyl}
There exists $C>0$ and $\mu_0>0$ such that 
\[
\sup_{h\in D}\dim \operatorname{Im} E_{H_{U_h}(\mu)} (E_1\mu)<C\mu^d, \quad \mu>\mu_0.
\]
\end{lem}

\begin{proof}
Let $r_0>0$ be the injectivity radius of $M$. Fix $r<r_0$. Then each open ball $B(x,r)\subset M$ is a relative compact open domain with smooth boundary. The exponential map $\exp^M_x : B(0,r)\subset T_xM\to B(x,r)\subset M$ along with a choice of an orthonormal frame in $T_xM$ for any $x\in M$ defines a normal coordinate system $h_x : B(0,r)\subset \RR^n\to B(x,r)\subset M$, $x\in M$. By bounded geometry conditions, the operators $H_{B(x,r)}(\mu)$, written in the normal coordinates, define a $C^\infty$-bounded family $H_{x}(\mu)$ of second order differential operators on $B(0,r)\subset \RR^n$. We consider them as unbounded operator in $L^2(B(0,r), dv_x)$, where $dv_x$ is the Riemannian volume form on $M$, written in the normal coordinates. By bounded geometry conditions, we know that $dv_x$ is $C^\infty$-bounded family of volume forms on $B(0,r)\subset \RR^n$. By the min-max principle, it is easy to see that, for any $\lambda\in \RR$, there exists $C>0$ such that for any $x\in M$ we have  
\[
\dim \operatorname{Im} E_{H_{B(x,r)}(\mu)} (\lambda)<\dim \operatorname{Im} E_{H_{B(x,r)}(0)} (\lambda)<C\lambda^d, \quad \mu>0, \quad \lambda>0.
\]

Next, we claim that there exists $N\in \mathbb N$ such that each $U_h$, $h\in D$, is covered by at most $N$ balls of the form $B(x,r)$. Indeed, by \eqref{e:tr0}, there exists $R>0$ such that each $U_h$ is contained in a ball $B(x_h,R)$ of radius $R$ centered at some $x_h\in M$. Consider a maximal (with respect to inclusion) set $y_{1,h},y_{2,h},\ldots, y_{\nu_h,h}$ of points in $B(x_h,R)$ such that $d(y_{j,h},y_{k,h})\geq r$ for any $j,k=1,2,\ldots,{\nu_h},$ $j\neq k$. It is clear that 
\[
B(x_h,R)\subset \bigcup_{j=1}^{\nu_h} B(y_{j,h},r),
\]  
and therefore  $U_h$ is covered by the balls $B(y_{j,h},r)$, $j=1,\ldots,{\nu_h}$.
 
On the other hand, each ball $B(y_{j,h},r/2)$ is contained in $B(x_h,R+r/2)$ and $B(y_{j,h},r/2)\cap B(y_{k,h},r/2)=\emptyset$ for any $j,k=1,2,\ldots,{\nu_h},$ $j\neq k$. Therefore, we have
\[
\sum_{j=1}^{\nu_h} {\rm vol} (B(y_{j,h},r/2)) \leq {\rm vol} (B(x_h,R+r/2)).
\] 
By bounded geometry conditions, we have 
\[
v_1(\rho) \leq {\rm vol} (B(x,\rho))\leq v_2(\rho), \quad \rho\in \RR_+. 
\]
We infer that 
\[
{\nu_h}\leq N:=\left[\frac{v_2(R+r/2)}{v_1(r/2)}\right]+1, \quad h\in D.
\]
Using this fact, we can proceed as in the proof of Lemma 4.2 in \cite{HM96}. We get 
\[
\dim \operatorname{Im} E_{H_{U_h}(\mu)} (E_1\mu)\leq \sum_{j=1}^{\nu_h} \dim \operatorname{Im} E_{H_{B(y_{j,h},r)}(\mu)} (E_1\mu+C)\leq N  C_1\mu^d, \mu>\mu_0,
\]
that completes the proof.
\end{proof}

\subsection{Decay of eigenfunctions}
As above, we assume that Assumption \ref{a1} holds with the fixed $E_0$ and $E_1$ satisfies \eqref{e:epsilon1}.

\begin{prop}\label{p:10a}
Assume that 
\[
H_{U_h}(\mu)u_\mu=\lambda(\mu)u_\mu
\]
with some $u_\mu\in \Dom (H_{U_h}(\mu))$, $\|u_\mu\|=1$ and $\lambda(\mu)\leq E_1\mu$. Then, for any $E_2\in (E_1,E_0)$, there exists $c>0$ such that, if $\mu>0$ is large enough, then
\[
\int_{U_h\setminus U_{E_2,h}}|u_\mu|^2dx\leq Ce^{-c \mu^{1/2}}, \quad h\in D\,.
\]
\end{prop}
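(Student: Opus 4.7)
The plan is a standard Agmon-type decay argument, using the weighted energy identity of Lemma~\ref{l:1.1} with a suitable Agmon weight tailored to the eigenvalue equation.

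\textbf{Step 1: Choice of the weight.} Fix auxiliary levels $E_1<E'<E''<E_2$, all independent of $h$, and set
\[
\Phi_h(x)=d_{E'}(x,U_{E'',h}),
\]
i.e.\ the (degenerate) Agmon distance from $x$ to $U_{E'',h}$ in the metric $g_{E'}=[V-E']_+\cdot g$. The function $\Phi_h$ is uniformly Lipschitz on $\overline{U_h}$, satisfies $\Phi_h\equiv 0$ on $U_{E'',h}$, and $|\nabla\Phi_h|^2\leq [V-E']_+$ almost everywhere.

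\textbf{Step 2: Apply the energy identity.} I would plug $u=u_\mu$ and $z=\lambda(\mu)$ into (\ref{e:energy}) on $W=U_h$. Since $H_{U_h}(\mu)u_\mu=\lambda(\mu)u_\mu$, the left-hand side vanishes, so
\[
\int_{U_h}|\nabla_{\bf A}(e^{\mu^{1/2}\Phi_h}u_\mu)|^2\,dx+\int_{U_h}e^{2\mu^{1/2}\Phi_h}\bigl(\mu(V-|\nabla\Phi_h|^2)-\lambda(\mu)\bigr)|u_\mu|^2\,dx=0.
\]
Dropping the nonnegative gradient term and splitting the remaining integral:
on $U_h\setminus U_{E'',h}$ one has $V\geq E''$ and $|\nabla\Phi_h|^2\leq V-E'$, so $V-|\nabla\Phi_h|^2\geq E'$ and the bracket is bounded below by $(E'-E_1)\mu>0$; on $U_{E'',h}$ the exponential factor equals $1$ and the bracket is bounded below by $-\lambda(\mu)\geq -E_1\mu$. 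Using $\|u_\mu\|=1$ this yields
\[
(E'-E_1)\mu\int_{U_h\setminus U_{E'',h}}e^{2\mu^{1/2}\Phi_h}|u_\mu|^2\,dx\leq E_1\mu.
\]

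\textbf{Step 3: Uniform lower bound on $\Phi_h$ outside $U_{E_2,h}$.} This is the main obstacle: I must show there exists $c_0>0$ independent of $h$ such that $\Phi_h(x)\geq c_0$ whenever $x\in U_h\setminus U_{E_2,h}$. Let $\gamma$ be any path from such $x$ to a point $y\in U_{E'',h}$. Since $V(x)\geq E_2$ and $V(y)<E''$, the Lipschitz estimate (\ref{e:LipV}) forces the Riemannian length of the portion of $\gamma$ lying in $\{V\geq E''\}$ to be at least $(E_2-E'')/L$. On this portion $[V-E']_+\geq E''-E'$, so the Agmon length of $\gamma$ exceeds $\sqrt{E''-E'}\,(E_2-E'')/L$. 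Taking the infimum over $\gamma$ and $y$ gives $\Phi_h(x)\geq c_0:=\sqrt{E''-E'}(E_2-E'')/L$, with $c_0$ depending only on $E_1,E_2,E_0$ and on the Lipschitz constant $L$ of $V$, hence uniform in $h\in D$.

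\textbf{Step 4: Conclusion.} Combining Steps 2 and 3 yields
\[
e^{2c_0\mu^{1/2}}\int_{U_h\setminus U_{E_2,h}}|u_\mu|^2\,dx\leq \frac{E_1}{E'-E_1},
\]
proving the claim with $c=2c_0$ and $C=E_1/(E'-E_1)$, both independent of $h$ and $\mu$. The main delicate point is Step~3: the Agmon distance bound from above is always easy, but the uniform positive lower bound requires simultaneously exploiting the uniform Lipschitz continuity of $V$ and the non-degeneracy of $g_{E'}$ on the annular region $\{E''\leq V\leq E_2\}$, with all four constants $E_1,E',E'',E_2$ chosen strictly separated at the outset.
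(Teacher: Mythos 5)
Your proof is correct and follows essentially the same route as the paper: the weighted energy identity of Lemma~\ref{l:1.1} applied to the eigenfunction, a splitting of the resulting integral across an intermediate sublevel set, and a uniform-in-$h$ positive lower bound on the Agmon weight outside $U_{E_2,h}$ derived from the Lipschitz continuity of $V$ and the non-degeneracy of the Agmon metric on the separating annulus. Your insistence on four strictly separated levels $E_1<E'<E''<E_2$ (so that the metric $g_{E'}$ is uniformly non-degenerate where the lower bound is needed) is, if anything, slightly more careful than the paper's choice of weight $d_{E_2}(\cdot,U_{E_3,h})$.
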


\begin{proof}
For any $h\in D$, consider a weight function $\Phi_h\in
\cW_{E_1}(U_h)$ given by $\Phi_h(x)= d_{E_2}(x,U_{E_3,h})$ with some $E_3\in (E_1,E_2)$.

By Lemma \ref{l:1.1}, we have 
\[
\int_{U_h} e^{2\mu^{1/2}\Phi_h}(\mu(V-|\nabla\Phi_h|^2) - \lambda(\mu))|u_\mu|^2\,dx\leq 0,
\]
or
\begin{multline}\label{e:1.1}
\int_{U_h\setminus U_{E_3,h}} e^{2\mu^{1/2}\Phi_h}(\mu(V-|\nabla\Phi_h|^2) - \lambda(\mu))|u_\mu|^2\,dx \\ \leq -\int_{U_{E_3,h}} e^{2\mu^{1/2}\Phi_h}(\mu(V-|\nabla\Phi_h|^2) - \lambda(\mu))|u_\mu|^2\,dx.
\end{multline}
For the right-hand side of \eqref{e:1.1}, we have the estimate
\[
\left|\int_{U_{E_3,h}} e^{2\mu^{1/2}\Phi_h}(\mu(V-|\nabla\Phi_h|^2) - \lambda(\mu))|u_\mu|^2\,dx\right|
  \leq C\mu \int_{U_{E_3,h}} |u_\mu|^2\,dx,
\]
where 
\[
C=\sup_{U_{E_3,h}}(|\nabla\Phi_h|^2+V +E_1).
\]
On the other hand, on $U_h\setminus U_{E_3,h}$, we have 
\[
\mu(V-|\nabla\Phi_h|^2) - \lambda(\mu)\geq  \mu E_2- \lambda(\mu)\geq (E_2-E_1)\mu.
\]
Therefore, for the left-hand side of \eqref{e:1.1}, we have
\begin{multline*}
\int_{U_h\setminus U_{E_3,h}} e^{2\mu^{1/2}\Phi_h}(\mu(V-|\nabla\Phi_h|^2) - \lambda(\mu))|u_\mu|^2\,dx\\ \geq (E_2-E_1)\mu \int_{U_h\setminus U_{E_3,h}} e^{2\mu^{1/2}\Phi_h}|u_\mu|^2\,dx.
\end{multline*}
Thus, by \eqref{e:1.1}, we get 
\[
\int_{U_h\setminus U_{E_3,h}} e^{2\mu^{1/2}\Phi_h} |u_\mu|^2\,dx \leq C  \int_{U_h} |u_\mu|^2\,dx=C.
\]
As in the proof of Proposition \ref{p:10}, one can show that $\Phi_h>c/2$ on $U_h\setminus U_{E_2,h}$ with some $c>0$. Therefore, we have
\begin{multline*}
\int_{U_h\setminus U_{E_2,h}}|u_\mu|^2dx\leq Ce^{-c \mu^{1/2}}\int_{U_h\setminus U_{E_2,h}} e^{2\mu^{1/2}\Phi_h} |u_\mu|^2\,dx\\
\leq Ce^{-c \mu^{1/2}}\int_{U_h\setminus U_{E_3,h}} e^{2\mu^{1/2}\Phi_h} |u_\mu|^2\,dx \leq Ce^{-c \mu^{1/2}},
\end{multline*}
that completes the proof. 
\end{proof}

\subsection{Proof of Theorem \ref{t:equiv}}
Now we complete the proof of Theorem \ref{t:equiv}. So we assume that Assumption \ref{a1} holds with the fixed $E_0$, $E_1$ satisfies \eqref{e:epsilon1}, and $\lambda(\mu) \in (0,E_1\mu)$ satisfies the condition: 
for any $\epsilon >0$\,, there exists $C_\epsilon>0$ such that 
\[
{\rm dist}\,(\lambda(\mu), \sigma(H_{U}(\mu)))\geq
\frac{1}{C_\epsilon}e^{-\epsilon \mu^{1/2}},
\]
  for all sufficiently large $\mu$.
By Proposition~\ref{p:D}, $\lambda(\mu)$ is not in the spectrum of $H(\mu)$  for sufficiently large $\mu>0$. It is a quite standard result that the spectral projection $E_{H(\mu)}(\lambda)$ is in $C^*(M)$ when $\lambda$ is in a spectral gap of the operator (see, for instance, \cite{kms,LT22}). The fact that the projection $P_{\mathcal H_{U,\mu} (\lambda)}$ is in $C^*(M)$ will be proved in Section~\ref{s:PHU} below in a more general setting.

Now we claim that for $\lambda=\lambda(\mu)$ satisfying the assumptions of the theorem, we have
\begin{equation}\label{e:E-pH}
E_{H(\mu)}(\lambda)  - P_{\mathcal H_{U,\mu}(\lambda)}=\mathcal O(e^{-c\mu^{1/2}}), \quad \mu\to +\infty. 
\end{equation}
By a well-known fact, this immediately implies Murray-von Neumann equivalence of the projections $E_{H(\mu)}(\lambda)$ and $P_{\mathcal H_{U,\mu}(\lambda)}$ for sufficiently large $\mu>0$.

By the Riesz formula one has, $$ 
E_{H(\mu)}(\lambda) = \frac{i}{2\pi} \oint_\Gamma (H(\mu)-z)^{-1} 
dz, $$
where $\Gamma$ is a contour intersecting the real axis at $\lambda$ and at some large negative number not in the spectrum of $H(\mu)$. Similarly, 
$$ 
E_{H_{U_h}(\mu)} (\lambda) =
\frac{i}{2\pi} \oint_\Gamma (H_{U_h}(\mu)-z)^{-1} 
dz. $$
We will use notation introduced in the proof of Proposition \ref{p:D}, in particular, the functions $\psi_h, h\in D,$ and $\psi_0$ and the operators $R^\mu(z)$ and $K^\mu(z)$.  By \eqref{H-R} and Lemma \ref{l:kh}, we infer that
$$ 
E_{H(\mu)}(\lambda)  =
\frac{i}{2\pi} \oint_\Gamma R^\mu(z) dz+\mathcal O(e^{-c\mu^{1/2}}), \quad \mu\to +\infty, $$
in the operator norm in $L^2(M)$. 

By \eqref{Rmu}, we have  
\begin{align*}
\frac{i}{2\pi } \oint_\Gamma R^\mu(z)dz=& \sum_{h\in D} \psi_h
\frac{i}{2\pi } \oint_\Gamma (H_{U_h}(\mu)-z)^{-1}dz \phi_h\\ & + \psi_0
\frac{i}{2\pi } \oint_\Gamma (H_{M_0}(\mu)-z)^{-1}dz \phi_0\,. 
\end{align*}
It is clear that 
\[
\frac{i}{2\pi} \oint_\Gamma (H_{M_0}(\mu)-z)^{-1}dz=0.
\]
We get 
\begin{equation}\label{e:E=ED}
E_{H(\mu)}(\lambda) =
\sum_{h\in D} \psi_h
E_{H_{U_h}(\mu)} (\lambda) \phi_h+\mathcal O(e^{-c\mu^{1/2}}), \quad \mu\to +\infty. 
\end{equation}

Now we show that 
\begin{equation}\label{e:pH=E}
P_{\mathcal H_{U,\mu}(\lambda)}= \sum_{h\in D} \phi_h E_{H_{U_h}(\mu)} (\lambda) \phi_h +\mathcal O(e^{-c\mu^{1/2}}), \quad \mu\to +\infty.
\end{equation}

First, observe that 
\begin{multline}\label{e:sum}
\|P_{\mathcal H_{U,\mu}(\lambda)}-\sum_{h\in D} \phi_h E_{H_{U_h}(\mu)} (\lambda) \phi_h\|\\ =\sup_{h\in D}\|P_{\mathcal H_{U_h,\mu}(\lambda)}- \phi_h E_{H_{U_h}(\mu)} (\lambda) \phi_h\|_{L^2(U_h)}.
\end{multline}
Let $u_{1,h,\mu},\ldots,u_{N_{h,\mu}(\lambda),h,\mu}$ be an orthonormal base in the image of the projection $E_{H_{U_h}(\mu)} (\lambda)$, $N_{h,\mu}(\lambda)=\dim E_{H_{U_h}(\mu)} (\lambda)$. Thus, the Schwartz kernel of $E_{H_{U_h}(\mu)} (\lambda)$ is given by
\[
e_{H_{U_h}(\mu)}(\lambda,x,y)=\sum_{j=1}^{N_{h,\mu}(\lambda)} u_{j,h,\mu}(x)\overline{u_{j,h,\mu}(y)}, \quad x,y\in U_h. 
\]
The Schwartz kernel of $ \phi_h E_{H_{U_h}(\mu)} (\lambda) \phi_h$ is given by
\[
\phi_h(x) e_{H_{U_h}(\mu)}(\lambda,x,y)\phi_h(y)=\sum_{j=1}^{N_{h,\mu}(\lambda)} \phi_h(x) u_{j,h,\mu}(x)\phi_h(y) \overline{u_{j,h,\mu}(y)}.
\]

The set $\phi_h u_{1,h,\mu},\ldots, \phi_h u_{N_{h,\mu}(\lambda),h,\mu}$ is a base in $\mathcal H_{U_h,\mu} (\lambda)$, the image of $\phi_h E_{H_{U_h}(\mu)} (\lambda)$. Thus, the Schwartz kernel of $P_{\mathcal H_{U_h,\mu} (\lambda)}$ is given by 
\[
p_{\mathcal H_{U_h,\mu} (\lambda)}(x,y)=\sum_{j,k=1}^{N_{h,\mu}(\lambda)} G_{jk,h,\mu} \phi_h(x) u_{j,h,\mu}(x)\phi_h(y) \overline{u_{k,h,\mu}(y)}, \quad x,y\in U_h,
\]
where $\{G_{jk,h,\mu}\}$ is the inverse of the matrix 
\[
g_{jk,h,\mu}=(\phi_h u_{j,h,\mu},\phi_h u_{k,h,\mu})_{L^2(U_h)}=\int_{U_h} |\phi_h(x)|^2 u_{j,h,\mu}(x)\overline{u_{k,h,\mu}(x)}dx.
\] 
Since $\phi_h\equiv 1$ on $U_{E_1+\eta,h}$, by Proposition \ref{p:10a}, we infer that 
\[
g_{jk,h,\mu}=\int_{U_h} u_{j,h,\mu}(x)\overline{u_{k,h,\mu}(x)}dx+\mathcal O(e^{-c\mu^{1/2}})=\delta_{jk}+\mathcal O(e^{-c\mu^{1/2}}).
\]
It follows that 
\[
G_{jk,h,\mu}=\delta_{jk}+\mathcal O(e^{-c\mu^{1/2}})
\]
and 
\begin{multline*}
p_{\mathcal H_{U_h,\mu} (\lambda)}(x,y)=\sum_{j=1}^{N_{h,\mu}(\lambda)} \phi_h(x) u_{j,h,\mu}(x)\phi_h(y) \overline{u_{j,h,\mu}(y)}(1+\mathcal O(e^{-c\mu^{1/2}})), \\ x,y\in U_h.
\end{multline*}

The operator $P_{\mathcal H_{U_h,\mu}(\lambda)}- \phi_h E_{H_{U_h}(\mu)}(\lambda) \phi_h$ is a Hilbert-Schmidt operator in $L^2(U_h)$ and its operator norm can be estimated by the Hilbert-Schmidt norm:
\begin{multline*}
\|P_{\mathcal H_{U_h,\mu}(\lambda)}- \phi_h E_{H_{U_h}(\mu)}(\lambda) \phi_h\|^2_{L^2(U_h)}\\ 
\leq \int_{U_h}\int_{U_h} \left|p_{\mathcal H_{U_h,\mu}(\lambda)}(x,y) - \sum_{j=1}^{N_{h,\mu}(\lambda)} \phi_h(x) u_{j,h,\mu}(x)\phi_h(y) \overline{u_{j,h,\mu}(y)}\right|^2dx\, dy \\
\leq C^2e^{-2c\mu^{1/2}} \int_{U_h}\int_{U_h}\left( \sum_{j=1}^{N_{h,\mu}(\lambda)} |\phi_h(x) u_{j,h,\mu}(x)| |{\phi_h(y) u_{j,h,\mu}(y)}|\right)^2 dx\, dy.
\end{multline*} 
Using the inequality $\frac{\sum_{j=1}^Na_j}{N}\leq \left(\frac{\sum_{j=1}^Na^2_j}{N}\right)^{1/2}$, we get
\begin{multline*}
\int_{U_h}\int_{U_h}\left( \sum_{j=1}^{N_{h,\mu}(\lambda)} |\phi_h(x) u_{j,h,\mu}(x)| |{\phi_h(y) u_{j,h,\mu}(y)}|\right)^2 dx\, dy\\ \leq N_{h,\mu}(\lambda) \int_{U_h}\int_{U_h}  \sum_{j=1}^{N_{h,\mu}(\lambda)} |\phi_h(x) u_{j,h,\mu}(x)|^2 |{\phi_h(y) u_{j,h,\mu}(y)}|^2 dx\, dy \\ = N_{h,\mu}(\lambda) \sum_{j=1}^{N_{h,\mu}(\lambda)} \|\phi_h u_{j,h,\mu}\|^2_{L^2(U_h)}<N_{h,\mu}(\lambda)^2, \quad h\in D. 
\end{multline*} 
It follows that 
\[
\|P_{\mathcal H_{U_h,\mu}(\lambda)}- \phi_h E_{H_{U_h}(\mu)}(\lambda) \phi_h\|_{L^2(U_h)}\leq CN_{h,\mu}(\lambda) e^{-c\mu^{1/2}}, \quad h\in D.
\]
Since $\lambda \in (0,E_1\mu)$, by \eqref{e:sum} and Lemma \ref{l:Weyl}, this proves \eqref{e:pH=E}.

Now we are ready to prove \eqref{e:E-pH}. By \eqref{e:E=ED} and \eqref{e:pH=E}, we have 
$$ 
E_{H(\mu)}(\lambda)  - P_{\mathcal H_{U,\mu}(\lambda)}=\sum_{h\in D} (\psi_h-\phi_h)
E_{H_{U_h}(\mu)} (\lambda) \phi_h + \mathcal O(e^{-c\mu^{1/2}}). 
$$
It remains to show that the first term in the right-hand side of the last identity is exponentially small.

Recall that $\supp \phi_h \subset U_{E_1+2\eta,h}$ and $\supp (\psi_h-\phi_h) \subset U_{E_1+3\eta,h} \setminus U_{E_1+\eta,h}$.
For any $h\in D$, consider a weight function $\Phi_h\in\cW_{E_1}(U_h)$ given by
$\Phi_h(x)=d_{E_1+\eta/2}(x,U_{E_1+\eta,h})$.
By construction, $\Phi_h(x)\equiv 0$ on $\supp \phi_h$. As above, one can show that there exists $c_1>0$ such that, for any $h\in D$, $\Phi_h(x)\geq c_1>0$ on $\supp (\psi_h-\phi_h)$. 

Recall that $\Gamma$ is a contour intersecting the real axis at $\lambda$ and at some large negative number not in the spectrum of $H(\mu)$. By Proposition~\ref{p:10}, for any $\mu>0$ is large enough and $z\in \Gamma$, the operator $(H_{U_h}(\mu)-z)^{-1}, h\in D,$ defines a bounded operator in
$L^2_{\mu^{1/2}\Phi_h}({U_h})$ and, for any $\epsilon>0$, there exists $C_{1,\epsilon}>0$ such that
\[
\|(H_{U_h}(\mu)-z)^{-1}\|_{\mu^{1/2}\Phi_h}\leq
C_{1,\epsilon}e^{\epsilon \mu^{1/2}}, \quad h\in D, z\in \Gamma, \mu\gg 1.
\]
 Therefore, for any $u\in L^2( M)$, we obtain
\begin{multline*}
\|(\psi_h-\phi_h) (H_{U_h}(\mu)-z)^{-1}\phi_h u\|^2_{L^2( M)}\\
\begin{aligned}
 = &
\|(\psi_h-\phi_h) (H_{U_h}(\mu)-z)^{-1}\phi_h u\|^2_{L^2(U_h)}\\  \leq &
e^{-c_1\mu^{1/2}} \|(\psi_h-\phi_h) (H_{U_h}(\mu)-z)^{-1}\phi_h
u\|^2_{\mu^{1/2}\Phi_h}\\ \leq & C e^{-c_1\mu^{1/2}}   
\|(H_{U_h}(\mu)-z)^{-1}\phi_h u\|^2_{\mu^{1/2}\Phi_h} \,.
\end{aligned}
\end{multline*}
It follows from Proposition~\ref{p:10} that, for any $\epsilon>0$,  there exists $C_{\epsilon}>0$ such that, for any sufficiently large $\mu$, we have
\begin{align*}
\|(\psi_h-\phi_h) (H_{U_h}(\mu)-z)^{-1}\phi_h u\|_{L^2( M)} \leq &
C_\epsilon e^{-(c_1-\epsilon)\mu^{1/2}} \|\phi_h
u\|_{\mu^{1/2}\Phi_h}\\ = & C_\epsilon
e^{-(c_1-\epsilon)\mu^{1/2}} \|\phi_h u\|_{L^2(U_h)}\\ = &
C_\epsilon e^{-(c_1-\epsilon)\mu^{1/2}} \| u\|_{L^2( M)}\,.
\end{align*}
By the Riesz formula, for any sufficiently large $\mu$, we have
\begin{align*}
\|(\psi_h-\phi_h) E_{H_{U_h}(\mu)} (\lambda)\phi_h u\|_{L^2( M)} \leq  
C_\epsilon e^{-(c_1-\epsilon)\mu^{1/2}} \| u\|_{L^2( M)}\,,
\end{align*}
that completes the proof of \eqref{e:E-pH}.

\section{Triviality of generalized Wannier projections}\label{s:PHU}

This section is devoted to the proof of Theorem~\ref{t:PHU}. We will keep notation introduced in Section~\ref{s:outline}.

Choose an orthonormal basis $\phi_{h,j}, j=1,2,\ldots , n_h,$ in each $H_h$. The projection $P_{\mathcal H_{\cU}}$ can be written as 
\begin{equation}\label{e:Phu}
P_{\mathcal H_{\cU}}=\sum_{h\in D}\sum_{j=1}^{n_h}\phi_{h,j}\langle \phi_{h,j}, \cdot\rangle.
\end{equation}
Given a function $f\in C_0(M)$ supported in the ball $B(x,R)$ of radius $R$ centered at some $x\in M$, we have
$$
fP_{\mathcal H_{\cU}}=\sum_{h} \sum_{j=1}^{n_h} f\phi_{h,j}\langle \phi_{h,j}, \cdot\rangle,
$$
where the sum is taken over all $h\in D$ such that $\mathcal U_{h}\cap B(x,R)\neq \emptyset$. 
As mentioned above, this set is finite. Therefore, the operator $fP_{\mathcal H_{\cU}}$ has finite rank and, therefore, is compact. Similarly, one can show that the operator $P_{\mathcal H_{\cU}}f$ is compact. Thus, the projection $P_{\mathcal H_{\cU}}$ is locally compact. For any $f,g\in C_0(M)$, we have
$$
fP_{\mathcal H_{\cU}}g=\sum_{h}\sum_{j=1}^{n_h} f\phi_{h,j}\langle \bar g\phi_{h,j}, \cdot\rangle,
$$
where the sum is taken over all $h\in D$ such that $\mathcal U_{h}\cap \operatorname{supp} f \neq \emptyset$ and $\mathcal U_{h}\cap \operatorname{supp} g\neq \emptyset$. If the distance between the supports of $f$ and $g$ is greater than $\delta$, where $\delta$ is given by \eqref{e:udiam}, then the set of such $h$'s is clearly empty, which implies that $fP_{\mathcal H_{\cU}}g=0$. It follows that $P_{\mathcal H_{\cU}}$ has finite propagation. Thus, we proved that $P_{\mathcal H_{\cU}}$ is in $C^*(M)$.

Recall some notation for Roe algebras from \cite{wannier}. Generally, the Roe algebra of a metric space $X$ is defined for an arbitrary Hilbert space representation of $C_0(X)$ satisfying certain properties, and the Roe algebra comes together with its representation on this Hilbert space. We have to be more specific. For a metric measure space $X$ and for a Hilbert space $H$ we write $C^*_H(X)$ for the Roe algebra represented on $L^2(X)\otimes H$. When $X$ is discrete, we use the notation $C^*_H(X)$ for the version of a Roe algebra determined by the canonical representation of $C_0(X)$ on $l^2(X)\otimes H$, but in this case $C^*_H(X)$ is the Roe algebra only when $H$ is infinite-dimensional. If $\dim H=1$, i.e. if $H=\mathbb C$ then $C^*_{\mathbb C}(X)$ is the uniform Roe algebra, and if $\dim H=n$ then $C^*_H(X)\cong C^*_{\mathbb C}(X)\otimes M_n(\mathbb C)$.

Now we assume that $M$ is not compact and follow the similar proof in \cite{wannier} with necessary modifications. First, by fixing a point in each $\mathcal U_{h}$, we will consider $D$ as a subset of $M$. By \eqref{e:udiscr}, this subset is uniformly discrete.
By Zorn's Lemma, we can find a maximal (with respect to inclusion) subset $D^\prime\subset M$ such that $D\subset D^\prime$ and 
\[
d\,(h_1, h_2)\geq 2\delta+\epsilon, \quad h_1\in D^\prime, \quad h_2\in D^\prime\setminus D, \quad h_1\neq h_2.
\]
Then $D^\prime $ is uniformly discrete and is a $(2\delta+\epsilon)$-net in $M$ (the latter is equivalent to the fact that $D^\prime$ is coarsely equivalent to $M$). For $h\in D^\prime\setminus D$, set $\mathcal U_{h}=B(h,r)$, the open ball in $M$ of radius $r=\min(\delta, r_M)$ centered at $h$. (Recall that $r_M>0$ is the injectivity radius of $M$.) We get a family $\mathcal U^\prime=\{\mathcal U_{h} : h \in D^\prime\}$ of relatively compact domains (with smooth boundary) in $M$, satisfying the analogs of the conditions \eqref{e:udiam} and \eqref{e:udiscr}, such that $\mathcal U\subset \mathcal U^\prime$.  

For $h\in D$, we complete the orthonormal basis $\phi_{h,j}, j=1,2,\ldots , n_h,$ in $H_h$ to an orthonormal basis $\phi_{h,j}, j=1,2,\ldots ,$ in $L^2(\mathcal U_{h})$ considered as a subspace in $L^2(M)$ and, for $h\in D^\prime\setminus D$, we choose an orthonormal basis $\phi_{h,j}, j=1,2,\ldots ,$ in $L^2(\mathcal U_{h})$.

Denote by $e_j, j=1,2,\ldots ,n,$ the standard orthonormal basis in $\mathbb C^n$. (Recall that $n$ is defined in \eqref{e:defn}.) Let $U:l^2(D^\prime)\otimes \mathbb C^n\to L^2(M)$ be the isometry defined by $$U(\delta_h\otimes e_j)=\phi_{h,j},\quad h\in D^\prime,\quad  j=1,2,\ldots , n.$$ 

The formula $T\mapsto UTU^*$ 
defines a $*$-homomorphism $j_{\mathbb C^n}:C^*(D^\prime)\otimes M_n(\mathbb C)\to \mathbb B(L^2(M))$ to the algebra of all bounded operators on $L^2(M)$. It was shown in \cite[Lemma 3]{wannier}, for $n=1$, that this $*$-homomorphism preserves the property of finite propagation, hence its range lies in $C^*_{\mathbb C}(M)\subset\mathbb B(L^2(M))$. The same argument works for any $n$, and shows that the formula $T\mapsto UTU^*$ 
defines a $*$-homomorphism $j_{\mathbb C^n}:C^*(D^\prime)\otimes M_n(\mathbb C)\to C^*_{\mathbb C}(M)$.

Note that the map $j_{\mathbb C^n}$ can be written as 
\begin{equation}\label{e:def-jCT}
j_{\mathbb C^n}(T)=\sum_{j,k=1}^n\sum_{h,h^\prime\in D^\prime}\phi_{h,j}\langle T_{hh^\prime,jk}\phi_{h^\prime,k}, \cdot\rangle,
\end{equation} 
where $T_{hh^\prime,jk}=\langle T(\delta_h\otimes e_j), \delta_{h^\prime}\otimes e_k\rangle$ are the matrix entries of $T$. 

Let $C_b(D^\prime)$ denote the commutative $C^*$-algebra of bounded functions on $D^\prime$. It is included into $C^*_{\mathbb C}(D^\prime)$ in a standard way: a function $f\in C_b(D^\prime)$ is mapped to the diagonal operator $T\in C^*_{\mathbb C}(D^\prime)$ with diagonal entries $T_{hh}=f(h)$, $h\in D^\prime$. Denote by $\gamma : C_b(D^\prime)\subset C^*_{\mathbb C}(D^\prime)$ the corresponding inclusion map. It induces a map $$\gamma_n : M_n(C_b(D^\prime))\cong C_b(D^\prime, M_n(\mathbb C)) \subset M_n( C^*_{\mathbb C}(D^\prime))\cong C^*_{\mathbb C}(D^\prime)\otimes M_n(\mathbb C).$$ 

Let $p_k\in M_n(\mathbb C)$, $k={0},1,\ldots n$, be the projection onto the first $k$ vectors of the standard basis of $\mathbb C^n$. Consider the projection $p$ in $C_b(D^\prime, M_n(\mathbb C))$ given by 
\[
p(h)=\begin{cases} p_{n_h}, & h\in D,\\ 0 & h\in D^\prime\setminus D.
\end{cases}
\]
By \eqref{e:Phu}, \eqref{e:def-jCT} and the definition of $\gamma_n$, it is easy to see that 
$$
j_{\mathbb C^n}(\gamma_n(p))=P_{\mathcal H_{\cU}}\in C^*_{\mathbb C}(M).
$$

For each $k={0},1,\ldots,n$, set $D_k=\{h\in D:n_h=k\}$. Then $D=\sqcup_{{k=0}}^n D_k$, 
\begin{equation}\label{e:p_k}
p=\sum_{k=1}^n p|_{D_k}, 
\end{equation}
and each $p|_{D_k}$ has rank $k$.

Let $V:\mathbb C\to H$ be an isometry, i.e.\ an inclusion of $\mathbb C$ onto a one-dimensional subspace of $H$, and let $V_{D^\prime}=\id\otimes V:l^2({D^\prime})=l^2({D^\prime})\otimes\mathbb C\to L^2(M)\otimes H$. We get an injective $C^*$-algebra homomorphism $i_{D'} :C^*(D^\prime)\to C^*_H(D')$ (cf. \cite[Section II]{wannier}):
\[
i_{D'}(T)=V_{D'}TV^*_{D'}, \quad T\in C^*(D^\prime).
\]
It is easy to see that 
$$
i_{D'}(T)=T\otimes e, 
$$
where $e=VV^* \in\mathbb K(H)$ is a rank one projection. This map induces a map in $K$-theory
$$
(i_{D^\prime})_*:K_0(C^*_{\mathbb C}(D^\prime))\to K_0(C^*_H(D^\prime)),
$$
which is independent of the choice of $V$. Note that the similar map $(i_M)_*:K_0(C^*_{\mathbb C}(M))\to K_0(C^*_H(M))$ is an isomorphism (and $C^*_{\mathbb C}(M)$ and $C^*_H(M)$ are isomorphic, since $M$ is not compact).  

Taking the composition of the map $\gamma_*:K_0(C_b(D^\prime))\to K_0(C^*_{\mathbb C}(D^\prime))$ induced by $\gamma$ with $(i_{D^\prime})_*$, we get a map $(\gamma_H)_*=(i_{D^\prime})_*\circ \gamma_* :K_0(C_b(D^\prime))\to K_0(C^*_H(D^\prime))$.

By \cite[Theorem 4]{wannier}, $[P_{H_{\mathcal U}}]=0$ in $K_0(C^*(M))$ iff $(\gamma_H)_*([p])=0$ in $K_0(C^*_H(D'))$, so, by (\ref{e:p_k}), it remains to show that $(\gamma_H)_*([p|_{D_k}])=0$ in $K_0(C^*_H(D'))$ for each $k=1,\ldots,n$. 

Let $D_0\subset D'$ be a subset, and let $1_{C_b(D_0)}$ denote the unit of the subalgebra $C_b(D_0)\subset C_b(D')$. 
Corollary 9 in \cite{wannier} states that $(\gamma_H)_*([1_{C_b(D_0)}])=0$ in $K_0(C^*_H(D'))$ (the argument there was based on detailed study of geometric structure of $D'$, in particular, on its ray structure). 
But ${[p|_{D_k}]}=k[1_{C_b(D_k)}]$, hence $(\gamma_H)_*([p|_{D_k}])=0$ for any $k=1,\ldots,n$, and this completes the proof of Theorem~\ref{t:PHU}. 

\section*{Acknowledgements}
The authors are grateful to the anonymous referee for valuable remarks, which allowed to improve the first version of the paper. 

The results of Sections 1--4 were obtained by Yu. Kordyukov. The work of Yu. Kordyukov is performed under the development program of Volga Region Mathematical Center (agreement No. 075-02-2024-1438). The results of Section 5 were obtained by V. Manuilov and supported by the RSF grant 24-11-00124.

\end{document}